\numberwithin{equation}{section}
\newtheorem{definition}{Definition}[section]
\newtheorem{prop}[definition]{Proposition}
\newtheorem{lem}[definition]{Lemma}
\newtheorem{thm}[definition]{Theorem}
\newtheorem{cor}[definition]{Corollary}
\title{Verschiebung maps among $K$-groups of truncated polynomial algebras}
\author{Ryo Horiuchi}
\date{}
\begin{document}

\newpage
\maketitle


\section{Introduction}
Let $p$ be a prime number, and let $A$ be a ring in which $p$ is nilpotent. In this paper, we consider the maps $$K_{q+1}(A[x]/(x^m), (x))\to K_{q+1}(A[x]/(x^{mn}), (x)),$$induced by the ring homomorphism $A[x]/(x^{m})\to A[x]/(x^{mn})$, $x\mapsto x^n$. We evaluate these maps, up to extension, for general $A$ in terms of topological Hochschild homology, and for regular $\mathbb{F}_p$-algebras $A$, in terms of groups of de Rham-Witt forms. After the evaluation, we give a calculation of the relative $K$-group of $\mathcal{O}_{K}/p\mathcal{O}_{K}$ for certain perfectoid fields $K$.

By Dundas-Goodwillie-McCarthy (\cite[Theorem 7.0.0.2]{DGM}), relative algebraic $K$-theory and relative topological cyclic homology coincide for algebras which we are studying. In \cite[Theorem 4.2.10]{HM1}, Hesselholt and Madsen constructed an exact sequence of algebraic $K$-theory for polynomial algebras and gave an explicit presentation (\cite[Theorem A]{HM1}) of the algebraic $K$-theory of a truncated polynomial algebra over a perfect field of positive characteristic by the Witt vectors. In \cite[Theorem A]{H1}, Hesselholt constructed a tower of such long exact sequences. The tower corresponds to the canonical projections $A[x]/(x^m)\to A[x]/(x^n)$ with $m\geq n$ for $A$ an $\mathbb{F}_p$-algebra. Although their results are expressed in the language of de Rham-Witt complexes and algebraic $K$-theory, what they have used are topological Hochschild homology and topological cyclic homology of ring spectra. Using the language of ring spectra and their homology theories, we consider the map of relative algebraic $K$-groups $$v_n \colon K_{q+1}(A[x]/(x^m), (x))\to K_{q+1}(A[x]/(x^{mn}), (x))$$ induced by the ring homomorphism $$A[x]/(x^{m})\to A[x]/(x^{nm}), \ \ x\mapsto x^n.$$ We also consider the colimit over the $v_n$'s indexed by the filtered category $(\mathbb{N}, *)$ of natural numbers under multiplication. One of our main results is the following (Theorem \ref{THM1}). The TR-groups are explained in Section 4.
\begin{thm}Let $A$ be a ring in which $p$ is nilpotent. There is a map of long exact sequences
\[
   \xymatrix{
   \vdots\ar[d]\ar[d]& \vdots\ar[d]&\\
    \lim_{R} \operatorname{TR}^{i/k}_{q-\lambda_{d_{i,k}}}(A) \ar[r]^-{\operatorname{id}} \ar[d]^{{V_k}_*} & \lim_{R} \operatorname{TR}^{i/kn}_{q-\lambda_{d_{i, kn}}}(A)  \ar[d]^-{{{V}_{kn}}_*}  \\  
    \lim_{R} \operatorname{TR}^{i}_{q-\lambda_{d_{i,k}}}(A) \ar[d]    \ar[r]^-{{V_{n}}_*} & \lim_{R} \operatorname{TR}^{i}_{q-\lambda_{d_{i, kn}}}(A)  \ar[d] \\
   K_{q+1}(A[x]/(x^k), (x))  \ar[d] \ar[r]^{v_n}& K_{q+1}(A[x]/(x^{nk}), (x)) \ar[d] \\
  \vdots&\vdots &,\\
}
\]
where $d_{i,k}=\lfloor(i-1)/k\rfloor$ is the integer part of $(i-1)/k$, $V$'s denote the maps induced by Verschiebung maps, the maps in the limits are restriction maps, and the maps $v_n$ are induced by $x\mapsto x^n$.
\end{thm}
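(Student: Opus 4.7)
The plan is to reduce to topological cyclic homology via Dundas-Goodwillie-McCarthy and then to apply the Hesselholt-Madsen decomposition of $\operatorname{THH}$ of truncated polynomial algebras, identifying the map induced by $x\mapsto x^n$ with the Verschiebung on genuine fixed points. The first step is to invoke \cite[Theorem 7.0.0.2]{DGM} to obtain natural equivalences $K(A[x]/(x^k),(x)) \simeq \operatorname{TC}(A[x]/(x^k),(x);p)$ compatible with $v_n$ (since $v_n$ is induced by a ring map, it is intertwined with the corresponding map of relative $\operatorname{TC}$), so that it suffices to construct the displayed diagram of long exact sequences at the level of relative $\operatorname{TC}$.

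Next, I would invoke the Hesselholt-Madsen splitting (\cite[Theorem 4.2.10]{HM1}, \cite[Theorem A]{H1}) of $\operatorname{THH}(A[x]/(x^k),(x))$ as a wedge of cyclotomic-spectrum summands indexed by $i\geq 1$ whose $C_{r}$-equivariant homotopy type is described in terms of the representation spheres $S^{\lambda_{d_{i,k}}}$. Taking genuine fixed points and then the inverse limit along the restriction maps produces the TR-limit groups appearing in the theorem: $\lim_R \operatorname{TR}^{i/k}_{q-\lambda_{d_{i,k}}}(A)$ arises from the $C_k$-free part of each summand and $\lim_R \operatorname{TR}^{i}_{q-\lambda_{d_{i,k}}}(A)$ from the full summand, connected by the cofiber sequence that produces the vertical long exact sequences of the bottom three rows.

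The heart of the argument is the compatibility with $v_n$: on the cyclotomic level, $x\mapsto x^n$ sends the $i$-th Hesselholt-Madsen summand of $\operatorname{THH}(A[x]/(x^k))$ into the $(ni)$-th summand of $\operatorname{THH}(A[x]/(x^{kn}))$, inducing the shift $d_{i,k}\leadsto d_{i,kn}=d_{ni,kn}$. On genuine $C_{r}$-fixed points, this $n$-fold reindexing realizes the Verschiebung $V_n$; composed with the Verschiebungs $V_k$ and $V_{kn}$ that express the $C_k$- and $C_{kn}$-free parts, this produces the labeled horizontal and vertical maps, while commutativity of each square follows from the relation $V_{kn}=V_k\circ V_n$. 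The identity label in the top row reflects the fact that on the $C_k$-free quotient the composition $V_k\circ V_n$ matches $V_{kn}$, so the horizontal map becomes the canonical identification.

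The main obstacle will be making this reindexing-to-Verschiebung identification rigorous at the level of the B\"okstedt model of $\operatorname{THH}$: one must carefully track the action of $x\mapsto x^n$ on the cyclic bar construction of $A[x]/(x^k)$, its restriction to each Hesselholt-Madsen wedge summand, and the induced map on genuine $C_{r}$-fixed points with coefficients in the representation sphere $S^{\lambda_{d_{i,k}}}$. Correctly aligning the reindexing $i\mapsto ni$ with the shift $d_{i,k}\mapsto d_{i,kn}$ of the representation-sphere twist, and verifying that the composite realizes $V_n$ compatibly with restriction maps, is the technically delicate step of the proof.
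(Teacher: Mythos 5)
Your overall strategy coincides with the paper's: reduce to relative $\operatorname{TC}$ via Dundas--Goodwillie--McCarthy, use the Hesselholt--Madsen equivalence $\operatorname{THH}(A[x]/(x^k))\simeq\operatorname{THH}(A)\otimes\operatorname{N^{cy}}(\Pi_k)$ together with the wedge decomposition of $\operatorname{N^{cy}}(\Pi_k)$ and the cofibration sequences built from the cyclic polytope maps $\theta_d$, and then identify the effect of $x\mapsto x^n$ on each summand. However, the step you defer as ``the technically delicate step'' is not a routine verification to be postponed --- it is the entire mathematical content of the result, and as written your argument has a genuine gap there. What is needed is not merely that the reindexing $i\mapsto ni$ ``realizes $V_n$'' on homotopy groups: to obtain a map of \emph{long exact sequences} (including compatibility with the connecting homomorphisms into the $K$-groups) you must produce a homotopy-commutative diagram of cofibration sequences of $\mathbb{T}$-spaces
\[
   \xymatrix{
  \mathbb{T}_+\wedge_{C_{i/k}}S^{\lambda_d} \ar[r] \ar[d]^-{\operatorname{id}} & \mathbb{T}_+\wedge_{C_i}S^{\lambda_d} \ar[r] \ar[d]^-{\operatorname{pr}} & \operatorname{N^{cy}}(\Pi_k, i) \ar[d]^-{g_{i,n}}\\
 \mathbb{T}_+\wedge_{C_{in/nk}}S^{\lambda_d} \ar[r] & \mathbb{T}_+\wedge_{C_{in}}S^{\lambda_d} \ar[r]  & \operatorname{N^{cy}}(\Pi_{nk}, ni),
}
\]
and this cannot be deduced from the relation $V_{kn}=V_n\circ V_k$ on $\operatorname{TR}$-groups, which only gives commutativity of individual squares of abelian groups.

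The paper closes this gap by an explicit geometric argument that your sketch does not supply: one writes down the affine map $\operatorname{e}_{i,n}\colon\Delta^{i-1}\to\Delta^{in-1}$ sending $\xi_i^j\mapsto\xi_{in}^{nj}$, checks (via the Jones/Hesselholt--Madsen identification $|\Lambda[m]|\cong\mathbb{T}\times\Delta^m$) that its realization is the map $g_{i,n}$ of cyclic bar constructions induced by $a\mapsto b^n$ (Lemma 3.1), and then verifies that $\operatorname{e}_{i,n}$ commutes up to $C_i$-equivariant homotopy with the cyclic polytope maps $\theta_{i,k}$ and $\theta_{in,kn}$ of Hesselholt--Madsen, including the more delicate boundary case $i=k(d+1)$ where the target is $(S^0\ast C_k)\wedge S^{\lambda_d}$ and the comparison map is induced by the subgroup inclusion $C_k\to C_{kn}$ (Proposition 3.2). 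Only after inducing up along $\mathbb{T}_+\wedge_{C_i}(-)$ does one obtain the displayed map of cofibration sequences, and only then does smashing with $\operatorname{THH}(A)$, passing to fixed points, homotopy limits over $F$ and $R$, and invoking Dundas--Goodwillie--McCarthy yield the theorem. Without an argument of this kind --- in particular without verifying that the left-hand vertical map really is the identity (because $C_{i/k}\le C_i$ maps isomorphically onto $C_{in/kn}\le C_{in}$ under $\xi_i\mapsto\xi_{in}^n$) and that the middle one is the projection inducing $V_n$ --- the proof is incomplete.
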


One of the remarkable facts about topological Hochschild homology is that its homotopy groups correspond to the groups of de Rham-Witt forms. Via this correspondence, stable homotopy theory and $p$-adic Hodge theory have been getting closer each other. Considering it, we also produce a diagram similar to the one above for regular algebras over $\mathbb{F}_p$ in terms of de Rham-Witt groups via the translation between topological Hochschild homology and the de Rham-Witt complex given by \cite[$\S$1, $\S$2, $\S$5]{H1}. See also \cite{H2}.

\begin{thm}Let $A$ be a regular $\mathbb{F}_p$-algebra. There is a map of long exact sequences\vspace{-2mm}
\[
   \xymatrix{
   \vdots\ar[d]&\vdots\ar[d]&\\
   \bigoplus_{l\geq0}\mathbb{W}_{l+1}\Omega_{A}^{q-2l} \ar[d]^-{{V_k}_*} \ar[r]^-{\operatorname{id}}       &       \bigoplus_{l\geq0}\mathbb{W}_{l+1}\Omega_{A}^{q-2l}      \ar[d]^-{{V_{kn}}_*}  \\  
     \bigoplus_{l\geq0}\mathbb{W}_{k(l+1)}\Omega_{A}^{q-2l}                      \ar[r]^-{{V_n}_*} \ar[d]&           \bigoplus_{l\geq0}\mathbb{W}_{kn(l+1)}\Omega_{A}^{q-2l}   \ar[d]\\
   K_{q+1}(A[x]/(x^k), (x)) \ar[r]^-{v_n} \ar[d] & K_{q+1}(A[x]/(x^{nk}), (x)) \ar[d] \\
 \vdots& \vdots &,\\
}
\]
where the subscript $m(l+1)$ denotes the truncation set $\{1, 2, ..., m(l+1)\}$ and $\mathbb{W}_{(-)}\Omega_A^*$ denotes the big de Rham-Witt complex over $A$.
\end{thm}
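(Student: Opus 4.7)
The plan is to derive Theorem 2 from Theorem 1 by transporting the entire diagram across Hesselholt's translation, valid for regular $\mathbb{F}_p$-algebras, between the representation-graded $\operatorname{TR}$-groups and the big de Rham--Witt groups $\mathbb{W}_{(-)}\Omega_A^*$; see \cite[\S1, \S2, \S5]{H1} and \cite{H2}. Once such a dictionary is installed on each of the four TR-levels of the diagram of Theorem 1, together with matching identifications of the vertical Verschiebung maps and of the restriction systems defining the limits, the long exact sequences and the horizontal map $v_n$ of Theorem 2 are inherited from Theorem 1 without any further $K$-theoretic input.

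The first step is to invoke the decomposition of $\operatorname{TR}^{i}_{q-\lambda_{d}}(A)$, for $A$ a regular $\mathbb{F}_p$-algebra, as a direct sum of truncated big de Rham--Witt groups $\mathbb{W}_{S(i,l)}\Omega_A^{q-2l}$ indexed by $0\leq l\leq d$. Substituting $d = d_{i,k} = \lfloor(i-1)/k\rfloor$, I would check that as $i \to \infty$ through the restriction tower, the truncation set $S(i,l)$ governing the $l$-th summand stabilizes at $\{1, 2, \ldots, k(l+1)\}$ and the restriction maps become eventually the identity on each summand. Passing to the limit along $R$ then identifies $\lim_R \operatorname{TR}^{i}_{q-\lambda_{d_{i,k}}}(A)$ with $\bigoplus_{l \geq 0} \mathbb{W}_{k(l+1)}\Omega_A^{q-2l}$, and likewise with $k$ replaced by $kn$ on the right. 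For the top row, the two indices $d_{i,k}$ and $d_{i,kn}$ yield the same stable limit, so the identity map on the TR-side translates into the identity on $\bigoplus_{l \geq 0} \mathbb{W}_{l+1}\Omega_A^{q-2l}$.

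The second step is to verify that the Verschiebung operators ${V_k}_*$, ${V_{kn}}_*$, ${V_n}_*$ on TR correspond, under this identification, to the Verschiebung maps in the big Witt complex, namely those sending a truncation set $S$ to $nS$ and acting on forms as the classical $V_n$. This is essentially the content of \cite[\S2]{H1}, where the equivariant Verschiebung on $\operatorname{THH}$ is matched with the Witt-theoretic Verschiebung. The main obstacle is the combinatorial bookkeeping: keeping simultaneous track of the representation grading $\lambda_{d_{i,k}}$, the degree shifts $q - 2l$, and the truncation sets $k(l+1)$, while ensuring that restriction maps, Verschiebung maps, and the $K$-theoretic map $v_n$ remain compatible under the translation. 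Once this dictionary is in place, Theorem 2 follows from Theorem 1 by direct substitution.
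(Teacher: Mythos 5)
Your proposal follows essentially the same route as the paper: the paper's own proof is a one-line deduction of this statement from Theorem \ref{THM1} via Hesselholt's translation in \cite[\S 5]{H1} between $\operatorname{TR}$-groups and the big de Rham--Witt complex. You simply spell out in more detail the stabilization of the truncation sets in the limit over $R$ and the matching of the Verschiebung operators, which the paper leaves implicit.
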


As a consequence of the above theorems, we calculate some relative $K$-groups. Let $k$ be a perfect field of characteristic $p$. We define $$\mathcal{O}_K:=(\operatorname{colim}_nW(k)[p^{1/p^n}])^{\wedge}$$ with $\mathfrak{m}_K\subset\mathcal{O}_K$ the maximal ideal, and $$\mathcal{O}_L:=(\operatorname{colim}_nW(k)[\zeta_{p^n}])^{\wedge}$$ with $\mathfrak{m}_L\subset\mathcal{O}_L$ the maximal ideal. With these notations, we have
\[
   \xymatrix{
   K_{2j-1}(\mathcal{O}_K/p\mathcal{O}_K, \mathfrak{m}_K/p\mathcal{O}_K)=\operatorname{colim}_n(\mathbb{W}_{p^nj}(k)/V_{p^n}\mathbb{W}_j(k)),\\
   }
\]
   \[
   \xymatrix{
   K_{2j-1}(\mathcal{O}_L/p\mathcal{O}_L, \mathfrak{m}_L/p\mathcal{O}_L)=\operatorname{colim}_n(\mathbb{W}_{p^{n-1}(p-1)j}(k)/V_{p^{n-1}(p-1)}\mathbb{W}_j(k)),\\      
}
\]
where the colimits are indexed by the category of natural numbers under addition and $\mathbb{W}$ denotes the big Witt vectors. Moreover, the relative $K$-groups in even degrees are zero.

\section{The cyclic bar-construction}
We recall some notations from \cite{H1} or \cite{HM1}. For a positive integer $k$, we let $\Pi_k:=\{0, 1, x^1, ..., x^{k-1}\}$ denote the finite commutative pointed monoid defined by $x^nx^m=x^{m+n}$, $0x^n=0$, $x^0=1$ and $x^k=0$. We let $\mathbb{T}$ denote the circle group and $\lambda_d:=\mathbb{C}(d)\oplus...\oplus\mathbb{C}(1),$ where $\mathbb{C}(j)=\mathbb{C}$ as linear spaces with the $\mathbb{T}$-action defined by $\mathbb{T}\times\mathbb{C}(j)\to\mathbb{C}(j), (z, w)\mapsto z^jw,$ for $1\leq j\leq d$. We let $\operatorname{N^{cy}}(\Pi_k)$ denote the geometric realization of the cyclic set $\operatorname{N^{cy}}(\Pi_k)[-]$ defined by cyclic bar construction of $\Pi_k$. This pointed space has the canonical decomposition 

$$\operatorname{N^{cy}}(\Pi_k)\cong\bigvee_{i\geq0}\operatorname{N^{cy}}(\Pi_k, i),$$ induced from the canonical decomposition of the cyclic set. For example, $\operatorname{N^{cy}}(\Pi_k, 0)$ is the 2-point space $\{0,1\}$. In \cite[$\S$3]{HM1}, the homotopy classes of the following maps of pointed $C_i$-spaces are defined; $\theta_d:\Delta^{i-1}/C_i\cdot\Delta^{i-k}\to S^{\lambda_d}$ for $kd<i<k(d+1)$, and $\theta_d:\Delta^{i-1}/C_i\cdot\Delta^{i-k}\to (S^0\ast C_k)\wedge S^{\lambda_d}$ for $i=k(d+1)$, where $C_m$ is the $m$-th cyclic group and $S^{\lambda_d}$ is the one point compactification of $\lambda_d$. They play a key role in this paper. In op. cit., for any positive integer $i$, the following cofibration sequence are constructed using $\theta$;
\[
   \xymatrix{
  \mathbb{T}_+\wedge_{C_{i/k}}S^{\lambda_{d_i}} \ar[r]^-{\operatorname{pr}} & \mathbb{T}_+\wedge_{C_i}S^{\lambda_{d_i}} \ar[r] & \operatorname{N^{cy}}(\Pi_k, i) \ar[r] & \Sigma\mathbb{T}_+\wedge_{C_{i/k}}S^{\lambda_{d_i}}, 
  }
\]
where $\mathbb{T}_+\wedge_{C_{i/k}}S^{\lambda_{d_i}}$ is trivial when $k$ does not divide $i$ and $d_i=\lfloor(i-1)/k\rfloor$ is the largest natural number less than or equal to $(i-1)/k$. We briefly recall the construction.


For the $i$-th cyclic group $C_i$, $\mathbb{R}[C_i]$ denotes the regular representation and $\Delta^{i-1}\subset\mathbb{R}[C_i]$ denotes the convex hull of the generators of $C_i$. By permutation $C_i$ acts on $\mathbb{R}[C_i]$ and the action restricts on $\Delta^{i-1}$. Let $\xi_i$ denote the generator of $C_i$ and $\Delta^{i-m}$ the convex hull of $1, \xi_i, ..., \xi_i^{i-m}$. The canonical decomposition of $\mathbb{R}[C_i]$ induces the projection map (\cite[p.11]{H1})
$$\pi_d:\mathbb{R}[C_i]\to \lambda_d,$$ if $2d<i$. We first consider the case $md<i<m(d+1)$. In \cite[$\S$3]{HM1}, it is proved that $0\not\in\pi_d(C_i\cdot\Delta^{i-m})\subset\lambda_d$. Composing $\pi_d|_{\Delta^{i-1}}$ and the radial projection, we get a $C_i$-equivariant map
$$\theta_d:\Delta^{i-1}/C_i\cdot\Delta^{i-m}\to S^{\lambda_d}.$$ We next consider the case $i=m(d+1)$. It is also proved that in \cite[$\S$3]{HM1} $0\not\in\pi_{d+1}(C_i\cdot\Delta^{i-m})\subset\lambda_{d+1}$. Furthermore, that proves $$\pi_{d+1}(C_i\cdot\Delta^{i-m})\cap\lambda_d^\perp=C_m',$$ where $\lambda_d^\perp$ is the orthogonal completion of the image of the canonical inclusion $\lambda_d\xrightarrow\iota\lambda_{d+1}$ and $C_m'$ is the preimage of $C_m$ by the isomorphism $\lambda_d^\perp\to \mathbb{C}(d+1)$ induced by $\iota$. Picking a small ball $B\subset\lambda_{d+1}\backslash C_m'$ around a point in the sphere $S(\lambda_d^\perp)$, we define $U:=(C_i\cdot B)\cap S(\lambda_{d+1})$. If $B$ is small enough, the projection $\pi_{d+1}$ and radial projection define a $C_i$-equivariant map $\theta'_d:\Delta^{i-1}/C_i\cdot\Delta^{i-m}\to D(\lambda_{d+1})/(S(\lambda_{d+1})\backslash U)$, where $D(\lambda_{d+1})$ denotes the disk in $\lambda_{d+1}$. \cite[$\S$3]{H1} shows that there is a strong deformation retract of $C_i$-spaces 
$$(S^0*C_m)\wedge S^{\lambda_d}\to D(\lambda_{d+1})/(S^{\lambda_{d+1}}\backslash U).$$
Therefore we get a homotopy class of $C_i$-equivariant maps
$$\theta_d:\Delta^{i-1}/C_i\cdot\Delta^{i-m}\to(S^0*C_m)\wedge S^{\lambda_d}.$$

We also recall some well-known theorems. For $A$ a commutative ring, Hesselholt-Madsen shows that, in \cite[Theorem 7.1]{HM2}, there is an equivalence
\begin{equation}\operatorname{THH}(A[x]/(x^k))\simeq \operatorname{THH}(A)\otimes \operatorname{N^{cy}}(\Pi_k) \tag{a}.
\end{equation}
This equivalence gives rise to $$\operatorname{THH}(A[x]/(x^k), (x))\simeq \bigvee_{i>0} \operatorname{THH}(A)\otimes  \operatorname{N^{cy}}(\Pi_k, i).$$
Here is a corollary of the famous theorem by Dundas-Goodwillie-McCarthy. For a ring $A$ in which $p$ is nilpotent, the cyclotomic trace map \begin{equation}K(A[x]/(x^k), (x))\to \operatorname{TC}(A[x]/(x^k), (x))\tag{b}\end{equation} is an equivalence.

In the present paper, using above theorems, we study the map $$K(A[x]/(x^k), (x))\to K(A[x]/(x^{nk}), (x)),$$ induced by $x\mapsto x^n$, for an $\mathbb{F}_p$-algebra $A$.


\section{The geometric Verschiebung map}
In order to study the map $K(A[x]/(x^k), (x))\to K(A[x]/(x^{nk}), (x))$, we  use two pointed commutative monoids $\Pi_k$ and $\Pi_{nk}$ and their realizations of cyclic bar constructions, and construct a map between corresponding cofibration sequences.

In \cite[7.2]{HM2},  Hesselholt and Madsen defined an isomorphism between the geometric realization $|\Lambda[n][-]|$ of the standard cyclic set and the product topological space $\mathbb{T}\times\Delta^n$ of the circle and the standard n-simplex as follows;\\
In \cite[Theorem 3.4]{J}, Jones constructed an homeomorphism between $|\Lambda[n][-]|$ and $\mathbb{T}\times\Delta^n$ and defined an action of $C_{n+1}$ on $\mathbb{T}\times\Delta^n$ by $$\tau_n\cdot(x;u_0, ..., u_n):=(x-u_0;u_1, ..., u_n, u_0).$$  However, Hesselholt and Madsen consider a different action of $C_{n+1}$ on $\mathbb{T}\times\Delta^n$ given by $$\tau_n*(x;u_0, ..., u_n):=(x-1/(n+1);u_1, ..., u_n, u_0),$$ and defined an $\mathbb{T}\times C_{n+1}$-equivariant homeomorphism $F_n:\mathbb{T}\times\Delta^n\to\mathbb{T}\times\Delta^n$ by $$F_n(x;u_0, ..., u_n):=(x-f_n(u_0, ..., u_n);u_0, ..., u_n)$$ with an affine map $f_n:\Delta^n\to\mathbb{R}$
$$f_n(u_1, ..., u_n, u_0)-f_n(u_0, ..., u_n)=1/(n+1)-u_0,$$ and $$f_n(1, 0, ..., 0)=0.$$ By construction, the restriction $F_n|_{\Delta^n}$ is the identity map.
We identify $|\Lambda[n]|$ with $\mathbb{T}\times\Delta^n$ via this isomorphism. We define a map $\operatorname{e_{i,n}}:\Delta^{i-1}\to\Delta^{in-1},$ which sends the vertex $(0, ..., 0, 1, 0, ..., 0)$, where the (m+1)th coordinates is 1, to the vertex $(0, ..., 0, 1, 0, ..., 0)$, where the (mn+1)th coordinate is 1,  for every $m\in\{0, ..., i-1\}$. In other words, $\operatorname{e_{i,n}}(\xi_i^j)=\xi_{in}^{nj}$ for $0\leq j\leq i-1$, where $\xi_i$, respectively $\xi_{in}$, is the generator of $C_i$, respectively $C_{in}$. 


\begin{lem}\label{benri}The map $\operatorname{e_{i,n}}$ induces the map $\operatorname{g_{i,n}}:\operatorname{N^{cy}}(\Pi_k, i)\to \operatorname{N^{cy}}(\Pi_{nk}, in)$, $a\mapsto b^n$, via the isomorphism, where $\Pi_k$, respectively $\Pi_{nk}$, is generated by $a$, respectively $b$.
\end{lem}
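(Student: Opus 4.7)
The plan is to verify the lemma by tracking the top non-degenerate simplex through the Hesselholt--Madsen identification $|\Lambda[q]|\cong \mathbb{T}\times\Delta^q$. The monoid map $a\mapsto b^n$ induces a cyclic map $\operatorname{N^{cy}}(\Pi_k)[-]\to\operatorname{N^{cy}}(\Pi_{nk})[-]$ whose restriction to the weight-$i$ wedge summand is $\operatorname{g_{i,n}}$. In particular, the unique non-degenerate top simplex $[a|a|\cdots|a]\in\operatorname{N^{cy}}(\Pi_k,i)[i-1]$ is sent to $[b^n|b^n|\cdots|b^n]\in\operatorname{N^{cy}}(\Pi_{nk},in)[i-1]$, which is an $(i-1)$-simplex rather than the top $(in-1)$-simplex of the target cyclic set.

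Both top simplices are fixed by the full cyclic action on their components, so they induce cyclic maps $\Lambda[i-1]\to\operatorname{N^{cy}}(\Pi_k)[-]$ and $\Lambda[in-1]\to\operatorname{N^{cy}}(\Pi_{nk})[-]$ whose realizations give surjections $\mathbb{T}\times\Delta^{i-1}\to |\operatorname{N^{cy}}(\Pi_k,i)|$ and $\mathbb{T}\times\Delta^{in-1}\to|\operatorname{N^{cy}}(\Pi_{nk},in)|$. To pin down $\operatorname{g_{i,n}}$ on realizations, I would factor the image simplex $[b^n|\cdots|b^n]$ through $[b|b|\cdots|b]$ using a careful composition of face operators that groups each block of $n$ consecutive $b$'s into a single $b^n$. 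Specifically, I would apply the last-face operators $d_{in-1},d_{in-2},\ldots,d_{in-n+1}$ first to form the wrap-around block anchored at vertex $\xi_{in}^{0}$, then apply $d_{1}^{n-1}, d_{2}^{n-1},\ldots,d_{i-1}^{n-1}$ to form the remaining $i-1$ blocks. Bookkeeping the removed vertices through this composition, the resulting face of $\Delta^{in-1}$ is spanned precisely by $\{\xi_{in}^{jn}:0\leq j\leq i-1\}$, which by definition is the image of $\operatorname{e_{i,n}}$.

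Because the Hesselholt--Madsen isomorphism is $\mathbb{T}$-equivariant and $F_n$ restricts to the identity on $\Delta^n$, this face-inclusion computation lifts to the equality $\operatorname{g_{i,n}}=\operatorname{id}_{\mathbb{T}}\times\operatorname{e_{i,n}}$ on the stratum coming from the top simplex; the equality extends to all of $|\operatorname{N^{cy}}(\Pi_k,i)|$ by cyclic-equivariance, since $C_i$-translates of the top stratum cover the whole component. The main obstacle is choosing the right composition of face operators: a naive strategy such as iterating $d_0$ alone produces a face that is cyclically translated from the image of $\operatorname{e_{i,n}}$ (with initial block absorbed toward $\xi_{in}^{n-1}$ rather than $\xi_{in}^{0}$), so one must begin with the wrap-around face maps to anchor the first block at $\xi_{in}^{0}$, thereby matching $\operatorname{e_{i,n}}$ on the nose rather than only up to a cyclic shift in the $\mathbb{T}$-factor.
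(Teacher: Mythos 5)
Your proposal is correct and follows essentially the same route as the paper's proof: identify both components with $\mathbb{T}\times\Delta^{i-1}$ and $\mathbb{T}\times\Delta^{in-1}$ via the top nondegenerate simplices and the Hesselholt--Madsen homeomorphism, write the image simplex $b^n\wedge\cdots\wedge b^n$ as the face of $b\wedge\cdots\wedge b$ obtained by a composite of face operators whose surviving vertices are exactly $\{\xi_{in}^{jn}\}$, and read off that the realization is $\operatorname{id}_{\mathbb{T}}\times\operatorname{e_{i,n}}$. The only difference is cosmetic: the paper uses the normalized composite $d_{in-1}\cdots d_1$ omitting the $d_{nj}$, while you compose the same elementary faces in a different order, and both determine the same order-preserving injection $[i-1]\hookrightarrow[in-1]$ with image $\{0,n,\dots,(i-1)n\}$.
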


\begin{proof}By \cite[Lemma 2.2.6]{HM1}, the map $\Lambda[i-1][-]\xrightarrow\alpha \operatorname{N^{cy}}(\Pi_k, i)[-]$ representing the $i-1$-simplex $a\wedge...\wedge a$ ($i$ factors) induces a $\mathbb{T}$-equivariant homeomorphism after the geometric realization. We write $\Lambda[in-1]\xrightarrow\beta \operatorname{N^{cy}}(\Pi_{kn}, in)[-]$ for the map representing the $in-1$-simplex $b\wedge...\wedge b$ (in factors). Then we have the following commutative diagram
\[
   \xymatrix{
   \operatorname{N^{cy}}(\Pi_k, i)[-] \ar[r]^-{g'_{i,n}}&\operatorname{N^{cy}}(\Pi_{kn}, in)[-]\\
   \Lambda[i-1]\ar[r]^-{\Psi}\ar[u]^-{\alpha} & \Lambda[in-1] \ar[u]^-{\beta},
}
\]
where the map $\Psi$ of cyclic sets is the one induced by the composition map $d_{in-1}d_{in-2}...d_2d_1$ except $d_{nj}$ for all $j\in\{1, ..., i-1\}$ and $g'_{i,n}$ is the map of cyclic sets that is given by $a\mapsto b^n$ and induces $g_{i,n}$ via the geometric realization by definition. The geometric realization of $\Psi$ with Hesselholt-Madsen's isomorphism mentioned above is given by 
$$\mathbb{T}\times\Delta^{i-1}\to\mathbb{T}\times\Delta^{in-1},$$$$ (t, (u_0, u_1, ..., u_{i-1}))\mapsto(t, (u_0, 0, 0, ...,0, u_1, 0, ..., 0, u_{i-1}, 0, 0, ..., 0)),$$where there are n-1 zeros between $u_{s-1}$ and $u_s$. By definition,  it is the map $\operatorname{id}_{\mathbb{T}}\times\operatorname{e_{i,n}}$. In other words, we have the commutative diagram
\[
   \xymatrix{
  \mathbb{T}\times\Delta^{i-1} \ar[rr]^-{\operatorname{id}_{\mathbb{T}}\times\operatorname{e_{i,n}}} &&\mathbb{T}\times\Delta^{in-1} \\
   |\Lambda[i-1]|\ar[rr]^-{|\Psi|}\ar[u]_-{\cong} && |\Lambda[in-1]| \ar[u]_-{\cong}.
}
\]
By the definition of the cyclic bar construction and the commutativity of our monoids, 
\begin{eqnarray*}
\beta_{i-1}(d^1d^2...d^{in-2}d^{in-1})&=&\beta_{i-1}\circ D(id_{[in-1]})\\
 &=&D_*\circ\beta_{in-1}(id_{[in-1]})\\
 &=&D_*(b\wedge...\wedge b)=b^n\wedge...\wedge b^n,
\end{eqnarray*}
\[
   \xymatrix{
   \Lambda[in-1][in-1] \ar[r]^-{\beta_{in-1}} \ar[d]^-D &\operatorname{N^{cy}}(\Pi_{kn}, in)[in-1] \ar[d]^-{D_*}\\
   \Lambda[in-1][i-1]\ar[r]^-{\beta_{i-1}} & \operatorname{N^{cy}}(\Pi_{kn}, in)[i-1],
}
\]
where $D$ is the image of the map $d_{in-1}d_{in-2}...d_2d_1$ except $d_{nj}$ for all $j\in\{1, ..., i-1\}$ by the contravariant functor $\Lambda[in-1][-]$ and $D_*$ is the image of the map $d_{in-1}d_{in-2}...d_2d_1$ except $d_{nj}$ for all $j\in\{1, ..., i-1\}$ by the contravariant functor $\operatorname{N^{cy}}(\Pi_{kn}, in)$.
\end{proof}

We now study the relation between the map $\operatorname{g}_{i,n}$ and the cofibration sequences above.
More precisely, we have two cofibration sequences for every $i>0$,

\[
   \xymatrix{
 \mathbb{T}_+\wedge_{C_{i/k}}S^{\lambda_{d_i}} \ar[r]^-{\operatorname{pr}} & \mathbb{T}_+\wedge_{C_{i}}S^{\lambda_{d_i}} \ar[r] & \operatorname{N^{cy}}(\Pi_{k}, i),\\
  \mathbb{T}_+\wedge_{C_{in/kn}}S^{\lambda_{d_i}} \ar[r]^-{\operatorname{pr}} & \mathbb{T}_+\wedge_{C_{in}}S^{\lambda_{d_i}} \ar[r] & \operatorname{N^{cy}}(\Pi_{kn}, in),
  }
\]
and are comparing them using $\operatorname{g}_{i,n}$.



\begin{prop}\label{benri2}{\rm (i)}\;For $kd<i<k(d+1)$, the following diagram of $C_i$-spaces commutes up to homotopy

\[
   \xymatrix{
   \Delta^{i-1}/C_i\cdot\Delta^{i-k} \ar[r]^-{\theta_{i,k}} \ar[d]^-{\operatorname{e_{i,n}}_*}&S^{\lambda_d} \ar[d]^-{\operatorname{id}}\\
   \Delta^{in-1}/C_{in}\cdot\Delta^{n(i-k)} \ar[r]^-{\theta_{in,kn}} & S^{\lambda_d}.\\
}
\]

{\rm (ii)}\;For $i=k(d+1)$, the following diagram of $C_i$-spaces commutes up to homotopy

\[
   \xymatrix{
   \Delta^{i-1}/C_i\cdot\Delta^{i-k} \ar[r]^-{\theta_{i,k}} \ar[d]^-{\operatorname{e_{i,n}}_*}&(S^0\ast C_k)\wedge S^{\lambda_d} \ar[d]\\
   \Delta^{in-1}/C_{in}\cdot\Delta^{n(i-k)} \ar[r]^-{\theta_{in,kn}} & (S^0\ast C_{nk})\wedge S^{\lambda_d},\\
}
\]
where the right hand side vertical map is induced by the inclusion $C_k\to C_{kn}$, $\xi_k\mapsto\xi_{nk}^k$ and the identity map on $S^{\lambda_d}$.
\end{prop}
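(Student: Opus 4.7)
The plan is to exploit the compatibility of $\operatorname{e_{i,n}}$ with the projections $\pi_d$ that appear in the construction of $\theta_d$, combined with the homotopy flexibility in the choice of radial projection. First, I would record that $\operatorname{e_{i,n}}$ is $C_i$-equivariant with respect to the group inclusion $C_i\hookrightarrow C_{in}$, $\xi_i\mapsto\xi_{in}^n$, which is immediate from $\operatorname{e_{i,n}}(\xi_i^j)=\xi_{in}^{nj}$; the elementary observation that $\operatorname{e_{i,n}}(\xi_i^r\cdot\Delta^{i-k})$ has vertices $\xi_{in}^{nr},\xi_{in}^{nr+n},\ldots,\xi_{in}^{nr+n(i-k)}$, all lying in $\xi_{in}^{nr}\cdot\Delta^{n(i-k)}$, then yields $\operatorname{e_{i,n}}(C_i\cdot\Delta^{i-k})\subseteq C_{in}\cdot\Delta^{n(i-k)}$; tracing vertices further gives $\operatorname{e_{i,n}}^{-1}(C_{in}\cdot\Delta^{n(i-k)})=C_i\cdot\Delta^{i-k}$, so the induced map on quotients is well-defined and collapses exactly the same locus to the basepoint.

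Next I would prove the key identity $\pi_d\circ\operatorname{e_{i,n}}=\pi_d$ as maps $\Delta^{i-1}\to\lambda_d$ (and analogously $\pi_{d+1}\circ\operatorname{e_{i,n}}=\pi_{d+1}$ when $i=k(d+1)$). Using the isotypic decomposition $\mathbb{R}[C_j]=\bigoplus_s\mathbb{C}(s)$, and writing $\zeta_m$ for a primitive $m$-th root of unity, the $\mathbb{C}(s)$-component of $\xi_i^j$ in $\mathbb{R}[C_i]$ is $\zeta_i^{js}$, while the $\mathbb{C}(s)$-component of $\xi_{in}^{nj}$ in $\mathbb{R}[C_{in}]$ is $\zeta_{in}^{njs}=(\zeta_{in}^n)^{js}=\zeta_i^{js}$. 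These coincide, and linearity extends the identity to all of $\Delta^{i-1}$.

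With these two steps in hand, homotopy commutativity of each diagram follows. For (i), both $\theta_{i,k}$ and $\theta_{in,kn}\circ\operatorname{e_{i,n}}$ are obtained by post-composing the common map $\pi_d|_{\Delta^{i-1}}$ with a radial projection to $S^{\lambda_d}$, and both collapse $C_i\cdot\Delta^{i-k}$ to the basepoint. The space of $C_i$-equivariant radial projections with this property is contractible (one can write down an explicit linear homotopy of bump functions), so the two maps are $C_i$-equivariantly homotopic. For (ii), the analogous argument applies with $\pi_{d+1}$ and the $C_i$-equivariant deformation retract $(S^0\ast C_m)\wedge S^{\lambda_d}\to D(\lambda_{d+1})/(S(\lambda_{d+1})\setminus U)$ of \cite[$\S$3]{H1}; I would pick a small ball $B_{\mathrm{top}}$ for the top row and then choose $B_{\mathrm{bot}}$ for the bottom row so that $C_i\cdot B_{\mathrm{top}}\subseteq C_{in}\cdot B_{\mathrm{bot}}$, making the $U$-data compatible.

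The main obstacle is part (ii), where the chosen deformation retracts must intertwine correctly with the specified vertical map $(S^0\ast C_k)\wedge S^{\lambda_d}\to(S^0\ast C_{nk})\wedge S^{\lambda_d}$ induced by $\xi_k\mapsto\xi_{nk}^k$. The point is to identify the orbit-theoretic content of this map — the inclusion of $C_k'$ as a $C_i$-invariant subset of $C_{nk}'$ inside $\lambda_d^\perp$ — with the combinatorial map on the join $S^0\ast C_m$. Once the balls are chosen coherently the identification is direct, but keeping track of the indexings and of the $C_i$- versus $C_{in}$-actions on $\lambda_{d+1}$ is where the bookkeeping lives.
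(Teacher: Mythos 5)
Your argument is correct and is essentially the paper's own proof: both rest on the observation that $\operatorname{e_{i,n}}$ intertwines the isotypic projections, since the vertex $\xi_i^j$ of $\Delta^{i-1}\subset\mathbb{R}[C_i]$ and the vertex $\xi_{in}^{nj}$ of $\Delta^{in-1}\subset\mathbb{R}[C_{in}]$ have the same image $\zeta_i^{js}$ (up to a positive scalar) in each $\mathbb{C}(s)$, which is exactly the paper's remark that ``in the complex plane $\xi_i^j=\xi_{in}^{nj}$.'' You simply make explicit the points the paper leaves implicit — well-definedness on the quotients, the contractible choice of radial projection, and the ball/join bookkeeping in case (ii), where the paper only says ``the same argument holds.''
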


\begin{proof}
We prove (i). The same argument holds for (ii). By the construction of $\theta$ (\cite[$\S$3]{H1}), $\theta_{i,k}(\xi_i^j)=[\xi_i^j, \xi_i^{2j}, ..., \xi_i^d]$, where $\xi_i$ is the generator of $C_i$. Likewise, $\theta_{in,kn}(\xi_{in}^j)=[\xi_{in}^j, \xi_{in}^{2j}, ..., \xi_{in}^d]$, where $\xi_{in}$ is the generator of $C_{in}$. By the definition of $\operatorname{e}$, we have $\operatorname{e}(\xi_{i})=\xi^n_{in}$. In the complex numbers plane $\mathbb{C}$, $\xi_{i}^j=\xi_{in}^{nj}$. 
\end{proof}




By this proposition, we get the following map of cofiber sequences. 

\begin{cor}\label{daiji}There is a homotopy commutative diagram of cofibration sequences
\[
   \xymatrix{
  \mathbb{T}_+\wedge_{C_{i/k}}S^{\lambda_d} \ar[r]^-{\operatorname{pr}} \ar[d]^-{\operatorname{id}} & \mathbb{T}_+\wedge_{C_i}S^{\lambda_d} \ar[r] \ar[d]^-{\operatorname{pr}} & \operatorname{N^{cy}}(\Pi_k, i) \ar[d]^-{g_{i,n}}\\  
 \mathbb{T}_+\wedge_{C_{in/nk}}S^{\lambda_d} \ar[r]^-{\operatorname{pr}} & \mathbb{T}_+\wedge_{C_{in}}S^{\lambda_d} \ar[r]  & \operatorname{N^{cy}}(\Pi_{nk}, ni),\\
}
\]
where $\mathbb{T}_+\wedge_{C_{i/k}}S^{\lambda_d}$ and $\mathbb{T}_+\wedge_{C_{in/kn}}S^{\lambda_d}$ are trivial when $k$ does not divide $i$ and $d=\lfloor (i-1)/k\rfloor$.
\end{cor}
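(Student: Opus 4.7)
The plan is to deduce the corollary directly from Proposition~\ref{benri2} and Lemma~\ref{benri}, using that the rows are already established as cofibration sequences. Both rows arise (\cite[$\S$3]{HM1}, \cite[$\S$3]{H1}) by applying $\mathbb{T}_+\wedge_{C_i}(-)$, respectively $\mathbb{T}_+\wedge_{C_{in}}(-)$, to the quotient map $\Delta^{i-1}\to\Delta^{i-1}/C_i\cdot\Delta^{i-k}$, and then using the $\theta_d$-maps to identify the subspace, total space, and cofiber with $\mathbb{T}_+\wedge_{C_{i/k}}S^{\lambda_d}$, $\mathbb{T}_+\wedge_{C_i}S^{\lambda_d}$, and $\operatorname{N^{cy}}(\Pi_k, i)$, respectively. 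It therefore suffices to verify that the two squares commute up to homotopy.

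For the left square, both composites equal the canonical projection $\mathbb{T}_+\wedge_{C_{i/k}}S^{\lambda_d}\to\mathbb{T}_+\wedge_{C_{in}}S^{\lambda_d}$ induced by the subgroup chain $C_{i/k}=C_{in/nk}\subset C_{in}$; the left vertical being the identity uses the equality $i/k=in/nk$. Hence the left square commutes on the nose.

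For the right square, Lemma~\ref{benri} identifies $g_{i,n}$ on $\operatorname{N^{cy}}(\Pi_k,i)$ with the map induced by $\operatorname{e}_{i,n}$ on $\Delta^{i-1}/C_i\cdot\Delta^{i-k}$ after passing through the cofiber description. Proposition~\ref{benri2} then supplies, in either case (i) or (ii), the $C_i$-equivariant homotopy between $\theta_{in,kn}\circ\operatorname{e}_{i,n}$ and the prescribed composite with $\theta_{i,k}$. Smashing with $\mathbb{T}_+$ and taking $C_i$-orbits preserves this $C_i$-equivariant homotopy, and the middle vertical map $\operatorname{pr}$ appears because the $C_{in}$-action extends the $C_i$-action via $\xi_i=\xi_{in}^n$, so the further projection from $C_i$-orbits to $C_{in}$-orbits is the natural receptacle.

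The main obstacle I anticipate is the case $i=k(d+1)$ of Proposition~\ref{benri2}(ii), where the target of $\theta$ is $(S^0\ast C_k)\wedge S^{\lambda_d}$ rather than just $S^{\lambda_d}$. One has to check that the inclusion $C_k\hookrightarrow C_{kn}$, $\xi_k\mapsto\xi_{kn}^k$, assembles with the $C_i$-to-$C_{in}$ orbit projection to produce the correct right vertical map $\operatorname{pr}\colon\mathbb{T}_+\wedge_{C_i}S^{\lambda_d}\to\mathbb{T}_+\wedge_{C_{in}}S^{\lambda_d}$ after the strong deformation retract $(S^0\ast C_m)\wedge S^{\lambda_d}\to D(\lambda_{d+1})/(S^{\lambda_{d+1}}\setminus U)$ collapses the join factor. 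Once this compatibility is checked, the remaining argument is a formal naturality statement for cofibers of $C_i$-equivariantly homotopy commuting squares smashed with $\mathbb{T}_+$.
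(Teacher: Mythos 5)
Your argument is essentially the paper's own proof: both deduce the corollary by applying $\mathbb{T}_+\wedge_{C_i}(-)$ and $\mathbb{T}_+\wedge_{C_{in}}(-)$ to the homotopy-commutative squares of Proposition~\ref{benri2}, identifying the cofibers with $\operatorname{N^{cy}}(\Pi_k,i)$ and $\operatorname{N^{cy}}(\Pi_{nk},in)$ via the homeomorphism of \cite[(3.1.1)]{HM1} together with Lemma~\ref{benri}, and observing that all three vertical maps are induced by the inclusion $C_i\to C_{in}$, $\xi_i\mapsto\xi_{in}^n$. The compatibility you flag in the case $i=k(d+1)$ (the join factor $C_k\hookrightarrow C_{kn}$ assembling into $\operatorname{pr}$) is exactly the point the paper leaves implicit, so your writeup is, if anything, slightly more careful.
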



\begin{proof}
Again by \cite[(3.1.1)]{HM1}, the map $\Lambda[j-1][-]\to\operatorname{N^{cy}}(\Pi_m, j)[-]$ representing $y\wedge y\wedge...\wedge y$ ($j$ factors) with the generator $y$ of $\Pi_m$ induces a $\mathbb{T}$-equivariant homeomorphism $$\mathbb{T}_+\wedge_{C_j}(\Delta^{j-1}/C_j\cdot\Delta^{j-m})\to\operatorname{N^{cy}}(\Pi_m, j).$$ We can get two cofibration sequences \[
   \xymatrix{
  \mathbb{T}_+\wedge_{C_{i/k}}S^{\lambda_d} \ar[r]^-{\operatorname{pr}} & \mathbb{T}_+\wedge_{C_i}S^{\lambda_d} \ar[r] & \operatorname{N^{cy}}(\Pi_k, i), \\
   \mathbb{T}_+\wedge_{C_{in/kn}}S^{\lambda_d} \ar[r]^-{\operatorname{pr}} & \mathbb{T}_+\wedge_{C_{in}}S^{\lambda_d} \ar[r] & \operatorname{N^{cy}}(\Pi_{kn}, {in}),
  }
\]
applying $\mathbb{T}_+\wedge_{C_i}(-)$ and $\mathbb{T}_+\wedge_{C_{in}}(-)$ respectively to the diagrams in \ref{benri2}. The inclusion map $C_i\to C_{ni}$, $\xi_i^j\mapsto\xi_{in}^{nj}$, induces the maps $\operatorname{id}$, $\operatorname{pr}$ and $g_{i,n}$ which make the diagram commutative.
\end{proof}

\section{Proof of theorems}
For a ring $A$, the topological Hochschild homology $\operatorname{THH}(A)$ is a cyclotomic spectrum. See, for example, \cite[III.4, III.5]{NS}. For a finite dimensional orthogonal $\mathbb{T}$-representation $\lambda$, we let $S^{\lambda}$ denote the one point compactification. We define $$\operatorname{TR}_{q-\lambda}^n(A):=[S^q\otimes(\mathbb{T}/C_n)_+, S^{\lambda}\otimes\operatorname{THH}(A)]_{\mathbb{T}}$$ to be the abelian group of maps in the $\mathbb{T}$-stable homotopy category (\cite[p.6-7]{H1}). 

Using the diagram in the corollary above, we can get a map of long exact sequences to study commutative rings.
\begin{thm}Let $A$ be a commutative ring and $k$ a positive integer. There is a map of long exact sequences\vspace{-1mm}
\[
   \xymatrix{
 \vdots\ar[d]&\vdots\ar[d]\\
 \prod_{i\geq 1} \operatorname{TR}^{i/k}_{q-\lambda_{\lfloor(i-1)/k\rfloor}}(A) \ar[d]^-{{V_k}_*} \ar[r]^-{\operatorname{id}} &      \prod_{i\geq 1} \operatorname{TR}^{i/kn}_{q-\lambda_{\lfloor(i-1)/kn\rfloor}}(A)     \ar[d]^-{{V_{kn}}_*} \\  
   \prod_{i\geq 1} \operatorname{TR}^{i}_{q-\lambda_{\lfloor(i-1)/k\rfloor}}(A)                 \ar[r]^-{{V_{n}}_*} \ar[d]& \prod_{i\geq 1} \operatorname{TR}^{i}_{q-\lambda_{\lfloor(i-1)/kn\rfloor}}(A) \ar[d]  \\
 \operatorname{TF}_{q+1}(A[x]/(x^k), (x)) \ar[r] \ar[d] &     \operatorname{TF}_{q+1}(A[x]/(x^{nk}), (x))  \ar[d] \\
 \vdots& \vdots\\
}
\]
where $V$ maps are given by Verschiebung maps, $e_i=\lfloor(i-1)/kn\rfloor$, and $\operatorname{TR}^{i/l}_s(A)$ is trivial when $i/l\not\in\mathbb{N}$.
\end{thm}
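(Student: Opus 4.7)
The plan is to apply $\operatorname{THH}(A) \wedge (-)$ to the map of cofibration sequences in Corollary \ref{daiji}, take the wedge over $i \geq 1$, and pass to $\mathbb{T}$-equivariant stable homotopy groups $[S^*, -]_{\mathbb{T}}$ to extract the two long exact sequences.

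For each $i \geq 1$, smashing the diagram in Corollary \ref{daiji} with $\operatorname{THH}(A)$ produces a map of cofibration sequences of $\mathbb{T}$-spectra. By equivalence (a) of Section 2 together with Lemma \ref{benri}, wedging the right-hand column over $i \geq 1$ recovers the map $\operatorname{THH}(A[x]/(x^k), (x)) \to \operatorname{THH}(A[x]/(x^{nk}), (x))$ induced by $x \mapsto x^n$. Applying $[S^*, -]_{\mathbb{T}}$ to each cofibration sequence and invoking the adjunction
\[
[S^q, \, \mathbb{T}_+ \wedge_{C_n} S^{\lambda} \wedge \operatorname{THH}(A)]_{\mathbb{T}} \;\cong\; [S^q \otimes (\mathbb{T}/C_n)_+, \, S^{\lambda} \otimes \operatorname{THH}(A)]_{\mathbb{T}} \;=\; \operatorname{TR}^n_{q-\lambda}(A)
\]
turns the first two terms of each long exact sequence into the TR groups of the statement, with $n = i/k, i$ respectively (and $n = in/(kn), in$ on the right-hand side) and representation $\lambda_{d_i}$, $d_i = \lfloor (i-1)/k \rfloor$. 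Taking the product over $i \geq 1$ preserves exactness in the category of abelian groups, producing the two long exact sequences.

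The vertical identifications in the theorem come from Corollary \ref{daiji}: the left vertical identity of Corollary \ref{daiji} yields the top horizontal $\operatorname{id}$ (since $C_{i/k} = C_{in/(kn)}$); the middle vertical projection $\operatorname{pr}\colon \mathbb{T}_+ \wedge_{C_i} \to \mathbb{T}_+ \wedge_{C_{in}}$ yields the middle horizontal $V_n$; and the right vertical $g_{i,n}$ produces the bottom horizontal map. The vertical maps ${V_k}_*$ and ${V_{kn}}_*$ in the theorem are induced by the cofibration maps $\mathbb{T}_+ \wedge_{C_{i/k}} \to \mathbb{T}_+ \wedge_{C_i}$ and $\mathbb{T}_+ \wedge_{C_{in/kn}} \to \mathbb{T}_+ \wedge_{C_{in}}$; under the adjunction these correspond to the transfers along the inclusions $C_{i/k} \subset C_i$ and $C_{in/kn} \subset C_{in}$, which are the Verschiebung maps $V_k$ and $V_{kn}$.

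The main obstacle I anticipate is the clean identification of the product (over $i \geq 1$) of the third cofiber terms with $\operatorname{TF}_{q+1}(A[x]/(x^k), (x))$: one must reconcile the wedge decomposition of the relative $\operatorname{THH}$ via (a) with the product form in the statement, which amounts to a careful bookkeeping of the coproduct-versus-product subtleties of $\mathbb{T}$-equivariant stable homotopy groups, together with tracking the degree shift between $\pi_q$ of the cofiber and the labeling $\operatorname{TF}_{q+1}$. Once these are in hand, naturality of the cofibration sequences in Corollary \ref{daiji} and of the adjunction above guarantees commutativity of all squares in the resulting diagram.
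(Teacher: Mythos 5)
Your overall skeleton (smash the diagram of Corollary \ref{daiji} with $\operatorname{THH}(A)$, assemble over $i$, use equivalence (a) to identify the cofiber term with relative $\operatorname{THH}$, and read off the Verschiebungs as transfers along $C_{i/k}\subset C_i$) is the same as the paper's. But there is a genuine gap in the step where you extract the long exact sequences: you apply $[S^q,-]_{\mathbb{T}}$ directly, i.e.\ you take genuine $\mathbb{T}$-fixed points. That is not the functor that produces this sequence. The paper, following Hesselholt--Madsen, takes $C_m$-fixed points for every finite $m$ and then the \emph{homotopy limit along the Frobenius maps} before passing to homotopy groups. This step is what puts $\operatorname{TF}_{q+1}(A[x]/(x^k),(x))$ in the third slot; $\pi_q$ of the $\mathbb{T}$-fixed points of $\operatorname{THH}(A[x]/(x^k),(x))$ is not $\operatorname{TF}_{q+1}$, and the natural map from it to $\lim_F\pi_*(\,\cdot\,)^{C_m}$ is not an isomorphism in general.

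A concrete symptom that your route cannot give the stated answer is the product over $i$. Since $S^q$ is compact, $[S^q,\bigvee_{i}Z_i]_{\mathbb{T}}\cong\bigoplus_i[S^q,Z_i]_{\mathbb{T}}$, so your argument would produce direct sums $\bigoplus_{i\geq1}$ in the first two rows, whereas the theorem asserts products $\prod_{i\geq1}$. The products arise precisely from the homotopy limit over the Frobenius maps: for each finite $m$ one gets a direct sum over $i$ of contributions from $(\mathbb{T}_+\wedge_{C_i}S^{\lambda_{d_i}}\wedge\operatorname{THH}(A))^{C_m}$, and passing to $\operatorname{holim}_F$ collapses each $i$-tower to a single copy of $\operatorname{TR}^i_{q-\lambda_{d_i}}(A)$ while turning the sum into a product (this is the content of the fundamental long exact sequence of \cite[Theorem B]{HM1}, and one must also control $\lim^1$ terms there). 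Your duality computation $[S^q,\mathbb{T}_+\wedge_{C_r}S^{\lambda}\wedge\operatorname{THH}(A)]_{\mathbb{T}}\cong\operatorname{TR}^r_{q-1-\lambda}(A)$ is fine as far as it goes, but it identifies the wrong terms of the wrong sequence. To repair the proof, replace the single application of $[S^q,-]_{\mathbb{T}}$ by: take $(-)^{C_m}$ for all $m$, form $\operatorname{holim}_F$, then take homotopy groups, and invoke the Hesselholt--Madsen identification of the resulting first two terms with $\prod_{i\geq1}\operatorname{TR}^{i/k}_{q-\lambda_{d_i}}(A)$ and $\prod_{i\geq1}\operatorname{TR}^{i}_{q-\lambda_{d_i}}(A)$.
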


\begin{proof}
Taking the infinite coproduct of the diagram in the corollary, we get the following map of cofibration sequences
\[\xymatrix{
  \bigvee_{i\geq 0}\mathbb{T}_+\wedge_{C_{i/k}}S^{\lambda_{d_i}} \ar[r]^-{\operatorname{pr}} \ar[d]^-{\operatorname{id}} & \bigvee_{i\geq 0}\mathbb{T}_+\wedge_{C_i}S^{\lambda_{d_i}} \ar[r] \ar[d]^-{\operatorname{pr}} & \operatorname{N^{cy}}(\Pi_k) \ar[d]^-{\operatorname{g_{n}}} \\
  \bigvee_{i\geq 0}\mathbb{T}_+\wedge_{C_{i/kn}}S^{\lambda_{e_i}} \ar[r]^-{\operatorname{pr}} & \bigvee_{i\geq 0}\mathbb{T}_+\wedge_{C_{i}}S^{\lambda_{e_i}} \ar[r]  & \operatorname{N^{cy}}(\Pi_{nk}),\\
}\]
where $\operatorname{g_{n}}$ denotes the map induced by $\Pi_k\to\Pi_{nk}$, $a\mapsto b^n$. Applying the functor $\operatorname{THH}(A)\otimes_{\mathbb{S}}(-)$ to the diagram above and taking fixed points and  the homotopy limits along with Frobenius maps and homotopy groups of spectra, we get the desired diagram by (a).
\end{proof}

We can deduce another map of another long exact sequences by the cofibration sequence.
\begin{thm}\label{THM1}Let $A$ be an algebra in which $p$ is nilpotent. There is a map of long exact sequences\vspace{-1mm}
\[
   \xymatrix{
   \vdots\ar[d]&\vdots\ar[d]&\\
  \lim_{R} \operatorname{TR}^{i/k}_{q-\lambda_{d_{i,k}}}(A) \ar[d]^-{{V_k}_*} \ar[r]^-{\operatorname{id}} &       \lim_{R} \operatorname{TR}^{i/kn}_{q-\lambda_{d_{i, kn}}}(A)            \ar[d]^-{{V_{kn}}_*}  \\  
  \lim_{R} \operatorname{TR}^{i}_{q-\lambda_{d_{i,k}}}(A)       \ar[d]           \ar[r]^-{{V_n}_*} & \lim_{R} \operatorname{TR}^{i}_{q-\lambda_{d_{i, kn}}}(A)   \ar[d] \\
  K_{q+1}(A[x]/(x^k), (x)) \ar[r]^-{v_n} \ar[d] & K_{q+1}(A[x]/(x^{nk}), (x))  \ar[d]  \\
   \vdots     & \vdots      &,\\
}
\]
where $d_{i,k}=\lfloor(i-1)/k\rfloor$, $d_{i, kn}=\lfloor(i-1)/kn\rfloor$, $V$ denotes the map induced by Verschiebung maps and the maps in the limits are restriction maps.
\end{thm}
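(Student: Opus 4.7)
The strategy is to parallel the proof of the preceding theorem, replacing the single Frobenius homotopy limit (which yields $\operatorname{TF}$) with the full topological-cyclic-homology construction, and then invoking the cyclotomic-trace equivalence (b) to identify relative $\operatorname{TC}$ with relative $K$-theory.

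Explicitly, I would take the wedge over $i\ge 1$ of the diagrams from Corollary \ref{daiji} and smash with $\operatorname{THH}(A)$, producing a commutative diagram of cofibration sequences of $\mathbb{T}$-spectra whose rightmost column, via the Hesselholt--Madsen equivalence (a), is the map
\[
\operatorname{THH}(A[x]/(x^{k}),(x))\longrightarrow \operatorname{THH}(A[x]/(x^{nk}),(x))
\]
of cyclotomic spectra. Applying the functor $\operatorname{TC}(-)=\operatorname{hofib}(F-1\colon\operatorname{TR}\to\operatorname{TR})$ with $\operatorname{TR}=\operatorname{holim}_{R}(-)^{C_{n}}$, and then taking $\pi_{*}$, yields a map of long exact sequences. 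By the definition of $\operatorname{TR}^{n}_{q-\lambda}(A)$ recalled at the start of Section~4, the homotopy groups of the two leftmost columns are identified with $\lim_{R}\operatorname{TR}^{i/k}_{q-\lambda_{d_{i,k}}}(A)$ and $\lim_{R}\operatorname{TR}^{i}_{q-\lambda_{d_{i,k}}}(A)$, respectively; the vertical maps are the identity and the projection from Corollary \ref{daiji} (the latter inducing the Verschiebung $V_{n}$), while the cofiber-connecting map in each row induces $V_{k}$ or $V_{kn}$. Finally, (b) identifies $\operatorname{TC}_{q+1}$ of the rightmost column with $K_{q+1}(A[x]/(x^{k}),(x))\to K_{q+1}(A[x]/(x^{nk}),(x))$, giving the bottom row of the target diagram.

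The main technical hurdle is controlling how $\lim_{R}$ and the wedge over $i$ interact with the formation of the long exact sequence: one must verify that the relevant $\lim^{1}$-terms vanish so that the fibration sequence of spectra yields an exact sequence on $\pi_{*}$, and that the wedge over $i$ becomes a product in each fixed total degree. The latter follows from the observation that for fixed $q$ only finitely many $i$ can contribute non-trivially, because $\dim\lambda_{d_{i,k}}=2\lfloor(i-1)/k\rfloor$ grows with $i$. Once these finiteness and Mittag-Leffler conditions are in place, the remainder of the argument is a direct transcription of the proof of the preceding theorem, and the naturality of all constructions in the map $\Pi_{k}\to\Pi_{nk}$, $a\mapsto b^{n}$, produces the claimed map between the two long exact sequences.
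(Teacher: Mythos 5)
Your proposal matches the paper's own proof: both smash the diagram of cofibration sequences from Corollary~\ref{daiji} with $\operatorname{THH}(A)$, invoke the splitting (a), pass to $\operatorname{TC}$ by combining the homotopy limits over Frobenius and restriction maps, and identify the bottom row with relative $K$-theory via (b), deferring the remaining bookkeeping (the $\lim^1$/connectivity issues you flag) to the argument of \cite[2.1]{H1}. The only cosmetic difference is that you build $\operatorname{TC}$ as $\operatorname{hofib}(F-1)$ on $\operatorname{holim}_R$, whereas the paper takes the Frobenius limit first and then the homotopy fixed points of the restriction maps; these are equivalent models, so the approach is essentially the same.
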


\begin{proof}
The same argument in the proof of \cite[2.1]{H1} holds. More precisely, we first smash $\operatorname{THH}(A)$ with the cofibration sequences in the proof of the above theorem and use $(a)$. Next we take homotopy limits along with Frobenius maps and homotopy fixed points of restriction maps. Then by $(b)$, we get the desired diagram.
\end{proof}
Taking the colimit of the diagram in the above theorem, we get the following

\begin{cor}Let $A$ be an $\mathbb{F}_p$-algebra and let $\tilde{A}:=\operatorname{colim}_n(A[x]/(x^{nk}), (x))$. Then there is a long exact sequence
\[ \begin{aligned}
{} & \xymatrix{
 { \cdots } \ar[r] &  \operatorname{lim}_R\operatorname{TR}^{i/k}_{q-\lambda_{d_{k}^i}}(A) \ar[r] & \operatorname{colim}_n\operatorname{lim}_R\operatorname{TR}^{i}_{q-\lambda_{d^i_{kn}}}(A) \\ } \\
{} & \xymatrix{
\phantom{\cdots} \ar[r] & K_{q+1}(\tilde{A}) \ar[r] & \cdots. \\ } \\
\end{aligned}
\]

\end{cor}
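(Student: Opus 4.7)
The plan is to apply the filtered colimit functor $\operatorname{colim}_n$ along the category $(\mathbb{N}, *)$ of natural numbers under multiplication to the map of long exact sequences produced by Theorem \ref{THM1}. Since $(\mathbb{N}, *)$ is filtered and filtered colimits of abelian groups (equivalently, of spectra) are exact, the colimit of a long exact sequence remains a long exact sequence; this turns the two-column map from Theorem \ref{THM1} into a single long exact sequence with three types of terms. The first step is to verify that the maps $v_n$ and their lifts by Verschiebung assemble into a bona fide $(\mathbb{N}, *)$-indexed diagram rather than merely a family of pairwise comparisons. The cocycle condition $v_{nm} = v_m \circ v_n$ is immediate from the factorization $x \mapsto x^n \mapsto x^{nm}$ of the corresponding ring homomorphism, and the analogous condition on the $\operatorname{TR}$-level maps follows from the functoriality in the exponent of the construction $g_{i,n}$ in Corollary \ref{daiji}.

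Next I would identify the colimit of each row. The top row has structure maps equal to the identity, since after reindexing $i = kj$ both sides read $\lim_R \operatorname{TR}^{j}_{q - \lambda_{j-1}}(A)$ using $d_{kj,k} = j-1 = d_{knj,kn}$; consequently its colimit equals $\lim_R \operatorname{TR}^{i/k}_{q - \lambda_{d^i_k}}(A)$ itself. The middle row is by construction the colimit along the Verschiebung maps $V_n$, producing $\operatorname{colim}_n \lim_R \operatorname{TR}^{i}_{q - \lambda_{d^i_{kn}}}(A)$ as in the statement. For the bottom row I would invoke the fact that algebraic $K$-theory of rings, and hence relative algebraic $K$-theory of the pairs $(A[x]/(x^{nk}), (x))$, commutes with filtered colimits, so that $\operatorname{colim}_n K_{q+1}(A[x]/(x^{nk}), (x)) \cong K_{q+1}(\tilde A)$.

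The main potential obstacle is ensuring that the data in Theorem \ref{THM1} threads into a coherent filtered system: the theorem as written only compares two values of the exponent, and to form the colimit one needs iterability and strict associativity of the comparison maps along arbitrary composites in $(\mathbb{N}, *)$. This reduces to checking that the monoid map $a \mapsto b^n$ is transitive under $b \mapsto c^m$, i.e.\ that the constructions of $\theta_d$, of the cofibration sequence involving $\operatorname{N^{cy}}(\Pi_k, i)$, and of $g_{i,n}$ recalled in Section 3 are functorial in the exponent $n$ in a way strict enough to yield a diagram of spectra. Granted this coherence, the corollary follows by applying $\operatorname{colim}_n$ termwise to the diagram of Theorem \ref{THM1} and combining it with the identification of the bottom colimit as $K_{q+1}(\tilde A)$.
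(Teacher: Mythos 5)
Your proposal is correct and follows essentially the same route as the paper: the paper's proof likewise observes that the colimit over $(\mathbb{N},*)$ is filtered, hence exact, takes the colimit of the long exact sequences from Theorem \ref{THM1}, and identifies $\operatorname{colim}_n K_{*}(A[x]/(x^{nk}),(x))$ with $K_{*}(\tilde A)$. Your additional attention to the coherence of the maps $v_n$ as an $(\mathbb{N},*)$-indexed diagram and to the identification of the top row's colimit is a reasonable elaboration of points the paper leaves implicit.
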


\begin{proof}The colimit is filtered. Therefore, we get a long exact sequence by taking the colimit of the long exact sequences obtained in the above theorem and $\operatorname{colim}_nK_*(A[x]/(x^{nk}), (x))$ is canonically isomorphic to $K_{*}(\tilde{A})$.
\end{proof}

For any $\mathbb{Z}_{(p)}$-algebra $A$, there is a decomposition by \cite[$\S$2]{H1}
$$\operatorname{TR}^i_{q-\lambda_d}(A)\xrightarrow{\cong}\prod_{j|i'}\operatorname{TR}^u_{q-\lambda_{\lfloor(p^{u-1}j-1)/k\rfloor}}(A;p),$$
where $i=p^{u-1}i'$ with $i'/p\notin\mathbb{N}$ and $d=\lfloor(i-1/k)\rfloor$.
The $i$-th component of the above isomorphism is given by the composition
$$\operatorname{TR}^i_{q-\lambda_d}(A)\to \operatorname{TR}^{i/j}_{q-\lambda_d}\to \operatorname{TR}^{p^{u-1}}_{q-\lambda_{(p^{u-1}j-1)/k}}(A),$$
of the the Frobenius map $F_j$ and the restriction map $R_{i'/j}$. We define $\operatorname{TR}^{u}_{q}(A;p):=\operatorname{TR}^{p^{u-1}}_{q}(A)$.

Let $A$ be a $\mathbb{Z}_{(p)}$-algebra for a prime number $p$, and $i$, $n$ and $q$ natural numbers. Write $i=p^{u-1}i'$ with $i'/p\notin\mathbb{N}$ and $n=p^{v-1}n'$ with $n'/p\notin\mathbb{N}$. Then there is a commutative diagram, see \cite{H1},
\[
   \xymatrix{\operatorname{TR}^i_{q-\lambda}(A) \ar[d] \ar[r]& \prod_{j|i'}\operatorname{TR}^u_{q-\lambda'}(A;p)\ar[d] \\
\operatorname{TR}^{in}_{q-\lambda}(A)\ar[r] & \prod_{j|i'n'}\operatorname{TR}^{u+v-1}_{q-\lambda'}(A;p), \\      
}
\]
where the left vertical map is $n$-th Verschiebung map $V^n$ and the right vertical map acts on the $j$ factor as $n'V^{v-1}$ which lands on $jn'$ factor and the horizontal maps are isomorphisms defined above. We use the stability lemma \cite[Lemma 2.6]{H1}.

\begin{lem}\label{antei}Let $p$ be a prime number and $A$ a $\mathbb{Z}_{(p)}$-algebra and $i=p^{u-1}i'$, $n=p^{v}n'$ and $q$ natural numbers and $j\in I_p$. Then there is a natural number $u'$ such that the following diagram is commutative 
\[
   \xymatrix{\operatorname{lim}_R\operatorname{TR}^u_{q-\lambda_{\lfloor(p^{u-1}j-1)/k\rfloor}}(A;p) \ar^-{\operatorname{pr}}[d] \ar^-{V^{p^v}_*}[r]& \operatorname{lim}_R\operatorname{TR}^u_{q-\lambda_{\lfloor(p^{u-1}j-1)/k\rfloor}}(A;p)\ar[d]^-{\operatorname{pr}} \\
\operatorname{TR}^{u'}_{q-\lambda_{\lfloor(p^{u'-1}j-1)/k\rfloor}}(A;p)\ar[r]^-{V^{p^v}_*} & \operatorname{TR}^{u'+v}_{q-\lambda_{\lfloor(p^{u'-1}j-1)/k\rfloor}}(A;p), \\      
}
\]
and vertical maps are isomorphisms and $q<2\lfloor(p^{u'}j-1)/k\rfloor$.
\end{lem}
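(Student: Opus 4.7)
The strategy is to reduce the assertion to a naturality statement at a single finite stage by invoking the stability lemma \cite[Lemma 2.6]{H1}, which turns all restriction maps beyond a certain level into isomorphisms. Once we are in that stable range the projection from the inverse limit collapses to an iso, and the commutativity of the square becomes a standard naturality fact for Verschiebung with respect to restriction.

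First I would choose $u'$. Since $\lfloor(p^{u'}j-1)/k\rfloor$ grows without bound in $u'$, I can pick $u'$ large enough that $q<2\lfloor(p^{u'}j-1)/k\rfloor$; this is by design the stable range that appears in the conclusion of the lemma. The stability lemma \cite[Lemma 2.6]{H1} then guarantees that for every $u\geq u'$ the relevant restriction map in the limit system defining $\operatorname{lim}_R\operatorname{TR}^u_{q-\lambda_{\lfloor(p^{u-1}j-1)/k\rfloor}}(A;p)$ is an isomorphism. Because $u'+v\geq u'$ also satisfies the stability inequality, the canonical projection from the inverse limit down to level $u'$, and likewise to level $u'+v$, is an isomorphism. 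This delivers the two vertical arrows claimed in the statement.

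Next I would verify commutativity. The Verschiebung maps $V^{p^v}$ are natural with respect to the restriction maps $R$ (this is part of the standard structure on $\operatorname{THH}(A)$ as a cyclotomic spectrum; cf.\ \cite[\S 2]{H1}). Consequently one has a commutative square at every finite level $u\geq u'$
$$\xymatrix{
\operatorname{TR}^{u+1}_{q-\lambda}(A;p) \ar[r]^-{V^{p^v}_*} \ar[d]_-{R} & \operatorname{TR}^{u+1+v}_{q-\lambda}(A;p) \ar[d]^-{R} \\
\operatorname{TR}^u_{q-\lambda}(A;p) \ar[r]^-{V^{p^v}_*} & \operatorname{TR}^{u+v}_{q-\lambda}(A;p).
}$$
Taking the inverse limit along $R$ on the top row and then projecting down to level $u'$ (respectively $u'+v$) on the bottom row assembles the commutative square of the lemma.

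The only genuine obstacle is controlling the stable range, and this is precisely what \cite[Lemma 2.6]{H1} provides; once stability is in force everything else is formal because Verschiebung commutes with restriction in the TR formalism. A small sanity check is that the same $u'$ simultaneously stabilizes the source and target of $V^{p^v}_*$, but this is automatic since raising the level by $v$ only strengthens the stability inequality.
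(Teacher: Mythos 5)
Your handling of the vertical maps is exactly the paper's: choose $u'$ with $q<2\lfloor(p^{u'}j-1)/k\rfloor$ and invoke \cite[Lemma 2.6]{H1} to make the restriction maps above that level, hence the projections from the limits, isomorphisms. That part is fine.

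The gap is in the commutativity argument. You treat $V^{p^v}_*$ as a self-map of a single $R$-limit system with a fixed coefficient representation $\lambda$, and then appeal to $RV=VR$. But in the situation where this lemma is used, the source and target are components of two \emph{different} decompositions: the left column comes from the $p$-typical decomposition of $\lim_R\operatorname{TR}^i_{q-\lambda_{d_{i,k}}}(A)$ (floor with denominator $k$) and the right column from that of $\lim_R\operatorname{TR}^{in}_{q-\lambda_{d_{in,kn}}}(A)$ (floor with denominator $kn$), with the $j$-component mapping to the $jn'$-component. So the target of the Verschiebung at level $u'$ naturally carries the representation $\lambda_{\lfloor(p^{u'+v-1}jn'-1)/kn\rfloor}$, not visibly $\lambda_{\lfloor(p^{u'-1}j-1)/k\rfloor}$ as written in the square. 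The entire non-formal content of the paper's proof is the arithmetic identity
\[
\Bigl\lfloor\frac{p^{u'+v-1}jn'-1}{kn}\Bigr\rfloor=\Bigl\lfloor\frac{p^{u'-1}j-1}{k}\Bigr\rfloor,
\]
(which holds because, writing $a=p^{u'-1}j$ and $M=p^vn'=n$, one has $\lfloor(aM-1)/(kM)\rfloor=\lfloor(a-1)/k\rfloor$); this is what identifies the two coefficient representations and makes the bottom map a genuine $V^{p^v}_*$ between groups with the same $\lambda$, after which the naturality you cite finishes the argument. Your proposal never checks this identification, so as written the finite-level square you draw does not yet apply to the diagram in the statement.
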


\begin{proof}We first note that $\lfloor(p^{u'+v-1}jn'-1)/kn\rfloor=\lfloor(p^{u'-1}j-1)/k\rfloor$. Therefore, the diagram is commutative. Moreover, by Lemma 2.6 in \cite{H1}, the vertical maps are isomorphisms for $q<2\lfloor(p^{u'}j-1)/k\rfloor$.
\end{proof}

Taking the limit on the decomposition, we get the isomorphism $$\operatorname{lim}_R\operatorname{TR}^i_{q-\lambda_d}\xrightarrow{\cong}\prod_{j\in I_p}\lim_R\operatorname{TR}^u_{q-\lambda_{(p^{u-1}j-1)/k}}(A;p),$$ where $I_p$ is the set of natural numbers which are not divided by $p$.

\begin{cor}
Let $A$ be a $\mathbb{Z}_{(p)}$-algebra and $\tilde{A}:=\operatorname{colim}_n(A[x]/(x^{nk}), (x))$. Then we can chose $\tilde{u}$ such that the following is a long exact sequence
\[ \begin{aligned}
{} &   \xymatrix{
\phantom{\cdots}  { \cdots } \ar[r] & \bigoplus_{j\in I_p}\operatorname{lim}_R\operatorname{TR}^{\tilde{u}}_{q-\lambda_{d^{k, p}_{u, j}}}(A;p)\\} \\
{} & \xymatrix{
\phantom{\cdots} \ar[r]&  \bigoplus_{j\in I_p}\operatorname{colim}_v\operatorname{lim}_R\operatorname{TR}^{\tilde{u}+v+s}_{q-\lambda_{d^{k, p}_{u, j}}}(A;p) \ar[r] & \\} \\
{} & \xymatrix{   
\phantom{\cdots} K_{q+1}(\tilde{A}) \ar[r]  &...,\\ }\\
\end{aligned}
\]
where $k=p^sk'$ and $n=p^vn'$ and $d^{k, p}_{u, j}=\lfloor(p^{u-1}j-1)/k\rfloor$.
\end{cor}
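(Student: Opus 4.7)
The strategy is to apply the $p$-typical decomposition to every term of the long exact sequence of the preceding corollary and then use the stability lemma to collapse each inverse limit into a single stable value. First I would substitute the isomorphism $\operatorname{lim}_R\operatorname{TR}^i_{q-\lambda_d}(A)\cong\prod_{j\in I_p}\lim_R\operatorname{TR}^u_{q-\lambda_{(p^{u-1}j-1)/k}}(A;p)$ recalled just before the statement into each of the three relevant terms, namely $\prod_i\lim_R\operatorname{TR}^{i/k}_{q-\lambda_{d^i_k}}(A)$, $\operatorname{colim}_n\prod_i\lim_R\operatorname{TR}^{i}_{q-\lambda_{d^i_{kn}}}(A)$, and the term mapping to $K_{q+1}(\tilde{A})$, thereby reindexing the outer product via pairs $(u,j)$ with $j\in I_p$ and $u\ge 1$.

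Next I would use the commutative square displayed immediately before Lemma \ref{antei} to identify the Verschiebung $V_n$ induced on TR by $x\mapsto x^n$ under this decomposition. Writing $n=p^v n'$ with $n'\in I_p$ and $k=p^s k'$ with $k'\in I_p$, that square shows that $V_n$ acts on the $(u,j)$-component as $n'V^{v}$ landing in the $(u+v,\,jn')$-component; composing with the extra identification coming from passing from $\Pi_k$ to $\Pi_{kn}$ accounts for the total level shift $\tilde{u}+v+s$ appearing in the second term of the claimed sequence.

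The main step is to apply Lemma \ref{antei} to collapse the tower: for each fixed $q$, $j$, and $k$, the lemma produces a level $u'$ beyond which the restriction maps of the tower defining $\lim_R\operatorname{TR}^u_{q-\lambda_{(p^{u-1}j-1)/k}}(A;p)$ are isomorphisms, so that this limit is identified with $\operatorname{TR}^{\tilde{u}}_{q-\lambda_{d^{k,p}_{u,j}}}(A;p)$ for $\tilde{u}$ chosen sufficiently large. Since the stability bound $q<2\lfloor(p^{u'}j-1)/k\rfloor$ widens with $j$, for each fixed $q$ only finitely many $j\in I_p$ contribute nontrivially below the stable range after one further filters along the Verschiebung-tower indexed by $v$; this is the mechanism by which the products $\prod_{j\in I_p}$ arising from the decomposition collapse to the direct sums $\bigoplus_{j\in I_p}$ in the stated colimit.

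Finally, taking the filtered colimit over $n$ is exact, so the long exact sequence of the preceding corollary transfers termwise to the decomposed form. The main obstacle will be the bookkeeping: one must choose $\tilde{u}$ uniformly enough across the three terms that the connecting homomorphisms still land in the stable ranges, and verify carefully that the index shift by $v+s$ correctly records both the $p$-part of $n$ acting via Verschiebung and the $p$-part of $k$ entering through the truncation level of $\Pi_{nk}$.
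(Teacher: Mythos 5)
Your proposal follows essentially the same route as the paper's (very terse) proof: apply the $p$-typical decomposition for $\mathbb{Z}_{(p)}$-algebras to the long exact sequence of the preceding corollary, use Lemma \ref{antei} to choose $\tilde{u}$ large enough that the projections from the limits are isomorphisms compatible with the Verschiebung maps, and pass to the filtered colimit over $n$. Your additional remarks on the connectivity bound forcing products to agree with direct sums and on the bookkeeping of the shift $v+s$ fill in details the paper leaves implicit, but do not change the argument.
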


\begin{proof}By \ref{antei}, for any $j\in I_p$, it is able to chose large enough $\tilde{u}$ such that the following commutes and vertical maps are isomorphisms
\[
   \xymatrix{\operatorname{lim}_R\operatorname{TR}^u_{q-\lambda_{\lfloor(p^{u-1}j-1)/k\rfloor}}(A;p) \ar^-{\operatorname{pr}}[d] \ar^-{V^{p^s}_*}[r]& \operatorname{lim}_R\operatorname{TR}^{u+s}_{q-\lambda_{\lfloor(p^{u-1}j-1)/k\rfloor}}(A;p)\ar[d]^-{\operatorname{pr}} \\
\operatorname{TR}^{\tilde{u}}_{q-\lambda_{\lfloor(p^{u'-1}j-1)/k\rfloor}}(A;p)\ar[r]^-{V^{p^s}_*} & \operatorname{TR}^{\tilde{u}+s}_{q-\lambda_{\lfloor(p^{u'-1}j-1)/k\rfloor}}(A;p), \\      
}
\]
where $k=p^sk'$. We also have the decomposition for $\mathbb{Z}_{(p)}$-algebra. Therefore, we get the desired long exact sequence.
\end{proof}

For regular $\mathbb{F}_p$-algebras,  in \cite[$\S$5]{H1} Hesselholt gave an explicit translation of topological Hochschild homology and big de Rham-Witt complex $\mathbb{W}_{(-)}\Omega_A^*$. By the translation, we immediately get the following from Theorem \ref{THM1}
\begin{cor}\label{honnyaku}Let $A$ be a regular $\mathbb{F}_p$-algebra. There is a map of long exact sequences
\vspace{-2mm}\[
   \xymatrix{
   \vdots\ar[d]&\vdots\ar[d]&\\
   \bigoplus_{l\geq0}\mathbb{W}_{l+1}\Omega_{A}^{q-2l} \ar[d]^-{{V_k}_*} \ar[r]^-{\operatorname{id}}       &       \bigoplus_{l\geq0}\mathbb{W}_{l+1}\Omega_{A}^{q-2l}      \ar[d]^-{{V_{kn}}_*}  \\  
     \bigoplus_{l\geq0}\mathbb{W}_{k(l+1)}\Omega_{A}^{q-2l}                      \ar[r]^-{{V_n}_*} \ar[d]&           \bigoplus_{l\geq0}\mathbb{W}_{kn(l+1)}\Omega_{A}^{q-2l}   \ar[d]\\
   K_{q+1}(A[x]/(x^k), (x)) \ar[r]^-{v_n} \ar[d] & K_{q+1}(A[x]/(x^{nk}), (x)) \ar[d] \\
 \vdots& \vdots &,\\
}
\]

where the subscript $m(l+1)$ is the truncation set $\{1, 2, ..., m(l+1)\}$ and $V$'s denote the maps induced by Verschiebung's.
\end{cor}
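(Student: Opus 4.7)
The plan is to deduce this corollary directly from Theorem \ref{THM1} by applying, term by term, Hesselholt's identification between the limit-TR groups of a regular $\mathbb{F}_p$-algebra and the big de Rham-Witt complex, as developed in \cite[$\S$1, $\S$2, $\S$5]{H1}. Since Theorem \ref{THM1} already produces the required map of long exact sequences at the level of TR-groups, and the bottom row (the relative $K$-groups) is unchanged, the content of this corollary is a purely algebraic rewriting of the top two rows rather than any new homotopy-theoretic input.

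The first step is to rewrite each limit-TR term via the identification. For a regular $\mathbb{F}_p$-algebra $A$, Hesselholt's translation yields
\begin{equation*}
\bigoplus_{i\geq 1} \lim_R \operatorname{TR}^{i/m}_{q-\lambda_{d_{i,m}}}(A) \;\cong\; \bigoplus_{l\geq 0} \mathbb{W}_{l+1}\Omega_A^{q-2l},
\end{equation*}
where the left-hand side is nontrivial only when $m\mid i$, i.e. for $i=m(l+1)$ with $l\geq 0$, in which case $d_{i,m}=l$ and $i/m=l+1$, accounting for the truncation length $l+1$. Similarly,
\begin{equation*}
\bigoplus_{i\geq 1} \lim_R \operatorname{TR}^{i}_{q-\lambda_{d_{i,m}}}(A) \;\cong\; \bigoplus_{l\geq 0} \mathbb{W}_{m(l+1)}\Omega_A^{q-2l},
\end{equation*}
since the $m$ indices $i$ in the range $ml<i\leq m(l+1)$ all give $d_{i,m}=l$ and their TR contributions assemble into a single Witt group of truncation $m(l+1)$. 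Applying these formulas with $m=k$ on the left column and $m=kn$ on the right column of the diagram in Theorem \ref{THM1} produces the de Rham-Witt groups displayed in the corollary; the cohomological shift $q-2l$ reflects $\dim_{\mathbb{R}}\lambda_l=2l$.

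The second step is to verify that the horizontal TR Verschiebung maps ${V_k}_*$, ${V_n}_*$, ${V_{kn}}_*$ appearing in Theorem \ref{THM1} correspond, under this translation, to the de Rham-Witt Verschiebung maps denoted by the same symbols in the target diagram. This is essentially the content of \cite[$\S$5]{H1}: the geometric Verschiebung induced by the stabilization of $\mathbb{T}$-equivariant spectra $\mathbb{T}_+\wedge_{C_{i/k}}S^{\lambda_d}\to \mathbb{T}_+\wedge_{C_{i/kn}}S^{\lambda_d}$ agrees, after the identification with $\mathbb{W}_{(-)}\Omega_A^*$, with the algebraic Verschiebung on the big de Rham-Witt complex. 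Once these compatibilities are recorded, the upper two rows of the corollary's diagram are obtained from Theorem \ref{THM1} by naturality of the translation, and exactness of each column is inherited from the source.

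The main obstacle is not homotopical but combinatorial: one must check that the reindexing which groups the $k$ (respectively $kn$) summands with $d_{i,k}=l$ (respectively $d_{i,kn}=l$) into a single Witt term of length $k(l+1)$ (respectively $kn(l+1)$) is compatible with both the restriction maps used to form the limits and the Verschiebung ${V_n}_*\colon \bigoplus_{l}\mathbb{W}_{k(l+1)}\Omega_A^{q-2l}\to \bigoplus_{l}\mathbb{W}_{kn(l+1)}\Omega_A^{q-2l}$. Once this bookkeeping is settled, the corollary follows immediately from Theorem \ref{THM1}.
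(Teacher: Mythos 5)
Your proposal is correct and follows essentially the same route as the paper, which likewise deduces the corollary by applying Hesselholt's translation from \cite[$\S$5]{H1} directly to Theorem \ref{THM1}; the paper simply states that the diagram follows "immediately" from that translation, whereas you spell out the reindexing of the summands and the compatibility of the Verschiebung maps. No discrepancy in substance.
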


By \cite[Theorem A]{HM1} and our result Corollary \ref{honnyaku}, we have the following commutative diagram of short exact sequences for any $j$
\[
   \xymatrix{
0  \ar[r] &\mathbb{W}_j(k) \ar[d]^-{\operatorname{id}} \ar[r]^-{V_m}&\mathbb{W}_{jm}(k)\ar[r]\ar[d]^-{V_n} &K_{2j-1}(k[x]/(x^m), (x))\ar[d] \ar[r]&0 \\
0\ar[r] &\mathbb{W}_j(k) \ar[r]^-{V_{mn}}&\mathbb{W}_{jmn}(k)\ar[r]&K_{2j-1}(k[x]/(x^{nm}), (x))\ar[r]&0, \\      
}
\]
where $\mathbb{W}$ denotes the big Witt vectors, and the most right vertical map is induced by the power map $x\mapsto x^n$. In the rest of this section, we consider applications of this diagram.

Let $k$ be a perfect field with characteristic $p>0$ and let 
$$W(k)[p^{1/p^{\infty}}]:=\operatorname{colim}_nW(k)[p^{1/p^{n}}],$$
where the structure maps are given by $p^{1/p^n}\mapsto(p^{1/p^{n+1}})^p$. We consider the completion $\mathcal{O}_K:=W(k)[p^{1/p^{\infty}}]^{\wedge}$ with quotient field $K:=\mathcal{O}_K[1/p]$ and residue field $k$. For example, if $k=\mathbb{F}_p$ then $\mathcal{O}_K=\mathbb{Z}_p[p^{1/p^{\infty}}]^{\wedge}$ with quotient field $\mathcal{O}_{K}[1/p]=\mathbb{Q}_p(p^{1/p^{\infty}})^{\wedge}$. Using $W(k)/p=k$ and $W(k)[x]/(x^{p^n}-p)=W(k)[p^{1/p^n}]$, we have 
$$\mathcal{O}_K/p\mathcal{O}_K=\operatorname{colim}_nk[x]/(x^{p^n})$$
with structure maps given by $x\mapsto x^p$. We let $\mathfrak{m}\subset\mathcal{O}_K$ denote the maximal ideal. Taking the colimit of the diagram above, we obtain a corollary.
 
\begin{cor}With the notation above, we have 
\[
   \xymatrix{
   K_{2j-1}(\mathcal{O}_K/p\mathcal{O}_K, \mathfrak{m}/p\mathcal{O}_K)=\operatorname{colim}_n(\mathbb{W}_{jp^n}(k)/V_{p^n}\mathbb{W}_j(k)),\\      
}
\]
where the colimit is indexed by the category of natural numbers under addition. Moreover, the relative $K$-groups in even degrees are zero.
\end{cor}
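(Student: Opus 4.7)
The plan is to specialize the ladder of short exact sequences displayed just before the corollary to the case $m=p^N$, $n=p$, and then pass to the filtered colimit over $N\in\mathbb{N}$ along the structural map $x\mapsto x^p$.

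Concretely, for each $N\geq 0$ the $N$-th row of the ladder reads
$$0 \to \mathbb{W}_j(k) \xrightarrow{V_{p^N}} \mathbb{W}_{jp^N}(k) \to K_{2j-1}(k[x]/(x^{p^N}),(x)) \to 0,$$
and the transition maps from row $N$ to row $N+1$ are the identity on the left, the Verschiebung $V_p$ in the middle, and the map $v_p$ induced by $x\mapsto x^p$ on the right, as supplied by Corollary \ref{honnyaku} combined with Hesselholt-Madsen's Theorem A. Filtered colimits of abelian groups are exact, so taking $\operatorname{colim}_N$ produces a short exact sequence whose leftmost term is $\mathbb{W}_j(k)$ (the transitions there being identities); hence the right-hand colimit is canonically isomorphic to the cokernel presentation $\operatorname{colim}_N\bigl(\mathbb{W}_{jp^N}(k)/V_{p^N}\mathbb{W}_j(k)\bigr)$.

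It then remains to identify this right-hand colimit with the target relative $K$-group. Since algebraic $K$-theory of a pair commutes with filtered colimits of rings, the identifications $\mathcal{O}_K/p\mathcal{O}_K = \operatorname{colim}_N k[x]/(x^{p^N})$ and $\mathfrak{m}/p\mathcal{O}_K = \operatorname{colim}_N (x)$ recalled in the setup give
$$\operatorname{colim}_N K_{2j-1}(k[x]/(x^{p^N}),(x)) \cong K_{2j-1}(\mathcal{O}_K/p\mathcal{O}_K, \mathfrak{m}/p\mathcal{O}_K),$$
which yields the stated formula. The vanishing in even degrees is immediate: Hesselholt-Madsen's Theorem A also gives $K_{2j}(k[x]/(x^{p^N}),(x))=0$ for every $N$ and every $j$, and a filtered colimit of zero groups is zero.

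The only point that genuinely requires care, rather than a true obstacle, is that after specialization to $m=p^N$ and $n=p$ the transition maps of the ladder really are $\mathrm{id}$, $V_p$ and $v_p$ as claimed; this is exactly the content of Corollary \ref{honnyaku} applied iteratively. Everything else is formal exactness of filtered colimits and continuity of relative $K$-theory in the ring argument.
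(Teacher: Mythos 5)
Your proposal is correct and follows essentially the same route the paper intends: specialize the ladder of short exact sequences (from Corollary \ref{honnyaku} together with Hesselholt--Madsen's Theorem A) to $m=p^N$, $n=p$, take the filtered colimit using exactness of filtered colimits and continuity of relative $K$-theory, and read off the cokernel presentation plus the vanishing in even degrees. The paper compresses all of this into ``taking the colimit of the diagram above,'' so your write-up is just a fleshed-out version of the same argument.
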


Let $k$ again be a perfect field with characteristic $p>0$ and let 
$$W(k)[\zeta_{p^{\infty}}]:=\operatorname{colim}_nW(k)[\zeta_{p^n}],$$
where $\zeta_{p^n}$ denotes a primitive $p^n$-th root of unity and we choose these to satisfy $\zeta_{p^n}^p=\zeta_{p^{n-1}}$. We consider the completion $\mathcal{O}_K:=W(k)[\zeta_{p^{\infty}}]^{\wedge}$ with quotient field $K=\mathcal{O}_K[1/p]$ and residue field $k$. For example, if $k=\mathbb{F}_p$ then $\mathcal{O}_K=\mathbb{Z}_p[\zeta_{p^{\infty}}]^{\wedge}$ with quotient field $\mathcal{O}_{K}[1/p]=\mathbb{Q}_p({\zeta_{p^{\infty}}})^{\wedge}$. Let us write $K_0=W(k)[1/p]$ and $K_n=K_0(\zeta_{p^n})$. Since $|K_n:K_0|=p^{n-1}(p-1)$ and $\zeta_{p^n}-1$ is a uniformizer, the map
$$k[x]/(x^{p^{n-1}(p-1)})\to W(k)(\zeta_{p^n})/p$$
given by $x\mapsto\zeta_{p^n}-1$ is an isomorphism. Moreover, with these isomorphisms, the following diagram
 \[
   \xymatrix{
W(k)(\zeta_{p^n})/p &k[x]/(x^{p^{n-1}(p-1)})\ar[l]_-{\cong}\\
W(k)(\zeta_{p^{n-1}})/p \ar[u] &k[x]/(x^{p^{n-2}(p-1)}) \ar[l]_-{\cong} \ar[u], \\      
}
\]
commutes, where the left vertical map is given by $\zeta_{p^{n-1}}-1\mapsto\zeta_{p^n}^p-1$ and the right vertical map is given by $x\mapsto x^p$.
By this construction, we have 
$$\mathcal{O}_K/p\mathcal{O}_K=\operatorname{colim}_nk[x]/(x^{p^{n-1}(p-1)}).$$
We let $\mathfrak{m}\subset\mathcal{O}_K$ denote the maximal ideal. Taking the colimit of the diagram above, we obtain a corollary again.
 
\begin{cor}With the notation above, we have 
\[
   \xymatrix{
   K_{2j-1}(\mathcal{O}_K/p\mathcal{O}_K, \mathfrak{m}/p\mathcal{O}_K)=\operatorname{colim}_n(\mathbb{W}_{jp^{n-1}(p-1)}(k)/V_{p^{n-1}(p-1)}\mathbb{W}_j(k)),\\      
}
\]
where the colimit is indexed by the category of natural numbers under addition. Moreover, the relative $K$-groups in even degrees are zero.
\end{cor}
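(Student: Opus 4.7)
The plan is to mimic the argument that established the preceding corollary (for $\mathcal{O}_K=W(k)[p^{1/p^{\infty}}]^{\wedge}$), substituting the cyclotomic tower for the Kummer tower. The key input is the commutative diagram of short exact sequences displayed just before the preceding corollary, which comes from combining Corollary \ref{honnyaku} with \cite[Theorem A]{HM1} and is available for any perfect field of characteristic $p$ and any natural numbers $j, m, n$.

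First I would record the identification $\mathcal{O}_K/p\mathcal{O}_K=\operatorname{colim}_n k[x]/(x^{p^{n-1}(p-1)})$ with transition maps $x\mapsto x^p$, which has just been verified in the paragraph preceding the statement. Then, for each $n\geq 1$, I would plug $m=p^{n-1}(p-1)$ and exponent parameter equal to $p$ into that diagram; this yields a tower of commutative diagrams of short exact sequences
\[
   \xymatrix@C=12pt{
0 \ar[r] & \mathbb{W}_{j}(k) \ar[d]^-{\operatorname{id}} \ar[r]^-{V_{p^{n-1}(p-1)}} & \mathbb{W}_{jp^{n-1}(p-1)}(k) \ar[d]^-{V_{p}} \ar[r] & K_{2j-1}(k[x]/(x^{p^{n-1}(p-1)}),(x)) \ar[d] \ar[r] & 0 \\
0 \ar[r] & \mathbb{W}_{j}(k) \ar[r]^-{V_{p^{n}(p-1)}} & \mathbb{W}_{jp^{n}(p-1)}(k) \ar[r] & K_{2j-1}(k[x]/(x^{p^{n}(p-1)}),(x)) \ar[r] & 0,
}
\]
where the rightmost vertical map is $v_p$, induced by $x\mapsto x^p$.

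Next I would pass to the filtered colimit over $n$. Since filtered colimits of abelian groups are exact, the colimit row is again short exact; the leftmost term remains $\mathbb{W}_{j}(k)$, and the rightmost term becomes $K_{2j-1}(\mathcal{O}_K/p\mathcal{O}_K,\mathfrak{m}/p\mathcal{O}_K)$ because algebraic $K$-theory (and its relative version along the compatible tower of ideals $(x)$) commutes with filtered colimits of rings. Reading off the cokernel then gives the desired formula
\[
K_{2j-1}(\mathcal{O}_K/p\mathcal{O}_K,\mathfrak{m}/p\mathcal{O}_K)=\operatorname{colim}_n\bigl(\mathbb{W}_{jp^{n-1}(p-1)}(k)/V_{p^{n-1}(p-1)}\mathbb{W}_{j}(k)\bigr).
\]
For the vanishing in even degrees, I would appeal to \cite[Theorem A]{HM1}, which gives $K_{2j}(k[x]/(x^m),(x))=0$ for every $m\geq 1$, and then commute $K$-theory past the same filtered colimit.

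There is no real obstacle: the entire argument is a verbatim transcription of the proof of the preceding corollary, with the exponents $p^n$ replaced by $p^{n-1}(p-1)$ and the transition maps $x\mapsto x^p$ supplied by the cyclotomic identification discussed above the statement. The only point worth being explicit about is that the transition map in the middle column of the tower is $V_p$ rather than $V_{p^{n-1}(p-1)}$, which is exactly the vertical map prescribed by Corollary \ref{honnyaku} for exponent parameter $p$; this is what makes the colimit cokernel read $V_{p^{n-1}(p-1)}\mathbb{W}_j(k)$ in the denominator, as the injection $\mathbb{W}_j(k)\hookrightarrow\mathbb{W}_{jp^{n-1}(p-1)}(k)$ at stage $n$ is $V_{p^{n-1}(p-1)}$.
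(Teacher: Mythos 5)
Your proposal is correct and follows the paper's own route exactly: the paper proves this corollary by the single sentence ``Taking the colimit of the diagram above, we obtain a corollary again,'' and your argument is precisely the fleshed-out version of that, specializing the short exact sequence diagram to $m=p^{n-1}(p-1)$ with exponent $p$, using exactness of filtered colimits and the compatibility of $K$-theory with filtered colimits of rings. The observation that the middle transition map is $V_p$ while the injection at stage $n$ is $V_{p^{n-1}(p-1)}$ is the right bookkeeping point.
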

The $p$-typical decomposition of the right-hand sides in Corollary 4.7 and Corollary 4.8 is explained in \cite[p.4-5]{H1}.

\section{Acknowkedgement}
I am deeply grateful to Lars Hesselholt for suggesting the topic presented here and helpful conversations. I also wish to express my gratitude to Martin Speirs for valuable comments and to the DNRF Niels Bohr Professorship of Lars Hesselholt for the support.


\begin{thebibliography}{99}
\bibitem{DGM}B. Dundas, T. Goodwillie, R. McCarthy, The local structure of algebraic $K$-theory. Algebra and Applications, 18. Springer-Verlag London, Ltd., London, 2013.

\bibitem{H1} L. Hesselholt, The tower of $K$-theory of truncated polynomial algebras, J. Topol. 1 (2008), 87-114.

\bibitem{H2} L. Hesselholt, The big de Rham-Witt complex, Acta Math. 214 (2015), 135-207.

\bibitem{HM1} L. Hesselholt, I. Madsen, Cyclic polytopes and the $K$-theory of truncated polynomial algebras, Invent. Math. 130 (1997), 73-97.

\bibitem{HM2} L. Hesselholt, I. Madsen, On the $K$-theory of finite algebras over Witt vectors of perfect fields, Topology 36 (1997), 29-101.

\bibitem{J} J. D. S. Jones: Cyclic homology and equivariant homology, Invent. Math. 87 (1987), 403-423.

\bibitem{M} R. McCarthy, Relative algebraic $K$-theory and topological cyclic homology,
Acta Math. 179 (1997), 197–222.

\bibitem{NS}T. Nikolaus, P. Scholze, On topological cyclic homology, arXiv:1707.01799

\end{thebibliography}
\end{document}